\newtheorem{proposition}{Proposition}
\newtheorem{remark}{Remark}
\theoremstyle{definition}
\newtheorem{step}{\textsc{Step}}
\crefname{step}{Step}{Step}
\newtheorem{experiment}{Experiment}
\crefname{experiment}{Experiment}{Experiment}
\crefname{figure}{Fig.}{Figure}
\title[On a quantitative partial imaging problem in vector tomography]
{On a quantitative partial imaging problem in vector tomography}
\author[H.~Fujiwara]{Hiroshi Fujiwara}
\address{Graduate School of Informatics,  Kyoto University, Yoshida Honmachi, Sakyo-ku, Kyoto 606-8501, Japan}
\email{fujiwara@acs.i.kyoto-u.ac.jp}
\author[K.~Sadiq]{Kamran Sadiq}
\address{Johann Radon Institute for Computational and Applied Mathematics (RICAM), Altenbergerstrasse 69, 4040 Linz, Austria}
\email{kamran.sadiq@ricam.oeaw.ac.at}
\author[A.~Tamasan]{Alexandru Tamasan}
\address{Department of Mathematics, University of Central Florida, Orlando, 32816 Florida, USA}
\email{tamasan@math.ucf.edu}
\DeclareMathSymbol{\Real}{\mathalpha}{AMSb}{"52}
\DeclareMathSymbol{\C}{\mathalpha}{AMSb}{"43}
\DeclareMathSymbol{\Z}{\mathalpha}{AMSb}{'132}
\DeclareMathOperator{\co}{Conv}
\DeclareMathOperator{\Repart}{Re}
\DeclareMathOperator{\Impart}{Im}
\DeclareMathOperator{\pv}{p\text{.}v\text{.}}
\def\sep{\:;\:}
\renewcommand{\i}{\mathrm i}
\newcommand{\BR}{\mathbb{R}}
\newcommand{\OM}{\Omega}
\newcommand{\bv}{{\bf v}}
\newcommand{\bw}{{\bf w}}
\newcommand{\bg}{{\bf g}}
\newcommand{\bzero}{\mathbf 0}
\newcommand{\del}{\partial}
\newcommand{\Gam}{\varGamma}
\newcommand{\ol}{\overline}
\newcommand{\ds}{\displaystyle}
\newcommand{\dba}{\overline{\partial}}
\newcommand{\B}{\mathcal{B}}
\newcommand{\BT}{\mathcal{T}}
\newcommand{\LL}{{\mathcal L}}
\newcommand{\INF}{{\infty}}
\newcommand{\sph}{{{\mathbb S}^ 1}}
\begin{document}
		\date{\today}

		\subjclass[2010]{Primary 65N21; Secondary 45E05.}
		
		
		
		\keywords{Vector tomography, momenta ray transform,   source reconstruction, numerical solution to Cauchy type singular integral equations, $A$-analytic maps, Bukhgeim-Beltrami equation, Doppler tomography}
		\maketitle

\begin{abstract}
  
  	In two dimensions, we consider the problem of  reconstructing  a  vector field from partial knowledge of its zeroth and first moment ray transforms.
  	Different from existing works the data is known on a subset of lines, namely the ones intersecting a  given arc.  The problem is non-local and, for partial data, severely ill-posed. We present a reconstruction method which recovers  the vector field in the convex hull of the arc.  An algorithm based on this method is implemented on some numerical experiments.    While still ill-posed the discretization  stabilizes the numerical reconstruction. 

\end{abstract}

\section{Introduction}\label{sec:intro}

We consider a problem of vector tomography in the Euclidean plane where a  real valued compactly supported  vector field $f=(f_1,f_2)$  is to be recovered from knowledge of its zeroth (Doppler) and first moment ray transforms
\begin{equation}\label{eq:mtrans}
	I^kf(x,\theta) := \int_{-\infty}^{\infty} t^k \theta\cdot f(\Pi_\theta(x)+t\theta)\:dt,  \quad (x,\theta)\in\BR^2\times S^1, \; k=0,1,
\end{equation}
where $\Pi_{\theta}(x)=x- (x\cdot\theta)\theta$ is the projection of $x$ onto $\theta^\perp$.



In Doppler tomography \cite{sparSLP95,schuster08}, only the zeroth moment ray transform is assumed known. Even if $I^0f$ is known on the manifold of all lines, it is well understood that at most one can recover the solenoidal part of the vector field. This is true for symmetric tensors of arbitrary order, where extensive work has been done for solenoidal inversion, 
 see, e.g.,  \cite{sharafutdinov_book94, paternainSaloUhlmann14, holmanStefanov, venkeMishraMonard19, louis, maltseva,louisetal} and references therein.




However, if both $I^0f$ and $I^1f$ are given on the manifold of all lines, then the entire vector field is uniquely and stably determined as shown for arbitrary order tensors in
\cite{sharafutdinov_book94,  sharafutdinov17, denisiuk}, with inversion formulas in \cite{derevtsovSvetov15,mishra,derevtsovetal21, venkySharafutdinov_etal2024}, and reconstruction methods proposed in  \cite{kunyanskyetal23, fujiwaraOmogbheSadiqTamasan23,derevtsov23}. 


For even dimensional domains the problem is non-local, where the determination of $f$ from partial knowledge of  $(I^0f,I^1f)$ on a strict sub-manifold of lines becomes a question of unique analytic continuation \cite{illmavirta}.
Reconstruction methods in problems where uniqueness is based on a unique analytic continuation argument are known to be  exponentially ill-posed \cite{lavrentiev}.




Let $\OM \subset \BR^2$ be a convex domain containing the support of $f$ and  $\Lambda$ be a smooth arc of its boundary.   

In here we consider a partial reconstruction problem in which $f$ is to be determined on the convex hull $\OM^+=\co(\Lambda)$  from knowledge of  $I^0f,I^1f$ on the set $\mathcal{S}$ of lines intersecting the arc $\Lambda$. More precisely, $x$ in \eqref{eq:mtrans}  is restricted to $ \Lambda$.
The reconstruction of $f$ on the subdomain $\Omega^+$ becomes a polynomially ill-posed problem. The proposed algorithm preserves this mildly ill-posedness by virtue of the choice of discretization, and thus achieves numerically stable reconstruction. We know of no other quantitative imaging method to recover $f$ on $\Omega^+$  from this partial data.

\begin{figure}[ht]
	\centering
	\begin{tikzpicture}[scale=1.14, cap=round,>=latex]
		\draw[name path=ellipse, thick,black] (0cm,1cm) ellipse (1.6cm and 2cm);
		
		\begin{scope}
			\clip(-4,1.37) rectangle (3,5); 
			\draw[name path=ellipse, thick,black, fill=lightgray] (0cm,1cm) ellipse (1.6cm and 2cm);
		\end{scope}

		
		\draw[name path=ellipse, thick,blue] (0.35cm,1.35cm) ellipse (1.0cm and 1.1cm);
		\coordinate[label=above:$\mathbin{\color{blue}{f}}$] (source) at (90:1.8cm);
		\coordinate[label=above:$\Omega^{+}$](OMp) at(100:2.4cm);
		
		\coordinate[label=above:$\mathbin{\color{red}{\Omega^{-}}}$]  (OMm) at(280:0.75cm);
		\coordinate[label=above:$\mathbin{\color{black}{\Lambda}}$] (Lam) at(85:3.0cm); 
		
		\coordinate[label=above:$\theta$] (tta) at(74:3.6cm);
		\draw[->] (110:-2.0cm)--(74:3.65cm);
		
		\coordinate[label=above:$\theta$] (tta1) at(135:3.2cm);
		\draw[->] (40:2.65cm)--(130:3.25cm);
		
		\coordinate[label=above:$\theta$] (tta2) at(100:3.68cm);
		\draw[->] (95:-2.0cm)--(100:3.7cm);
		
		\coordinate[label=right:$\theta$] (tta3) at(50:3.4cm);
		\draw[->] (-40:-2.80cm)--(50:3.4cm);
		
		\coordinate[label=right:$\theta$] (tta4) at(60:3.7cm);
		\draw[->] (20:-2.65cm)--(60:3.7cm);
		
		\coordinate[label=left:$\theta$] (tta5) at(-35:-2.65cm);
		\draw[->] (40:3.25cm)-- (-35:-2.65cm);


		\draw[dashed][color=red] (20:-2.0cm)--(34:3.05cm);
		\draw[dashed] [color=red](-20:-2.25cm)--(15:3.05cm);
		\draw[dashed][color=red] (-30:-2.4cm)--(-5:2.55cm);
		\draw[dashed][color=red] (-5:-2.4cm)--(25:2.85cm);
		\draw[dashed][color=red] (-20:-2.65cm)--(20:3.55cm);
		
		\filldraw[red] (140:2.08cm) circle(1.2pt);
		\coordinate (endpoint1) at(140:2.10cm);
		\filldraw[red] (41:2.1cm) circle(1.2pt);
		\coordinate (endpoint2) at (41:2.12cm);
		\coordinate[label=below:$\mathbin{\color{red}{L}}$] (Lsegment) at (80:1.4cm);
		\draw[color=red, thick] (endpoint1) -- (endpoint2);
		\tikzset{position label/.style={below = 3pt, text height = 1.5ex, text depth = 1ex},
			brace/.style={decoration={brace,mirror},decorate}}
		\tikzset{position label/.style={above = 2.5pt,text height = 1.1ex,text depth = 1ex},
			brace/.style={decoration={brace,mirror}, decorate}}

	\end{tikzpicture}
\caption{$(I^0f,I^1f)$ is known on the solid lines but not on the dashed lines. } \label{fig:1}
\end{figure}
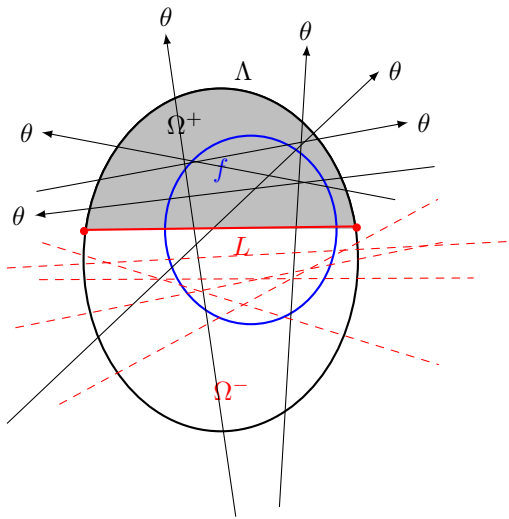

This novel method  relies on the extension of the original theory of $A$-analytic maps \cite{bukhgeimBook} to a system of Bukhgeim-Beltrami equations as in   \cite{fujiwaraOmogbheSadiqTamasan23}
and solves a Cauchy problem with partial boundary data as in \cite{fujiwaraSadiqTamasan21}. Our method requires a smooth $f\in C_0^{1,\mu}(\OM;\mathbb{R}^2),  1/2<\mu <1$, see Remark \ref{remark1}.



The partial reconstruction method is introduced in Section \ref{partialInv}.
 In Section \ref{sec:Alg} we formulate a numerical algorithm and discuss the numerical stability due to the discretization.  
We implement the algorithm in two numerical experiments  in Section \ref{sec:numstudy}, while in Section \ref{sec:conclusion} we  discuss the numerical feasibility of the method.

\section{Partial inversion of the momenta Doppler transform} \label{partialInv}

The reconstruction problem is formulated as an inverse boundary value problem for a coupled system of transport equations as in \cite{fujiwaraOmogbheSadiqTamasan23}.
We will show that the trace on $\Lambda$ of the solution of this system uniquely determines the  trace on the inner chord $L$ (inaccessible by direct measurement), thus reducing the problem to an inverse boundary value problem with data on the entire boundary of $\OM^+=\co(\Lambda)$.

Upon a rotation and translation of the domain, we may assume that the chord $L$ joining the endpoints of $\Lambda$ is the interval $(-s,s)$ on the real axis for some $s>0$, and that $\Lambda$ lies in the upper half plane; see Figure \ref{fig:1} for the geometric setting.

For $k =0,1$, let $v^k(x,\theta)$ be the unique solution  
to the system of transport equations:
\begin{subequations}\label{bvp_UK_transport}
	\begin{align}\label{TransportEq_u0}
		\theta\cdot\nabla v^0(x,\theta)  &=  \theta \cdot f(x), 
		\\  \label{TransportEq_uk}
		\theta\cdot\nabla v^1(x,\theta) &=  v^{0}(x,\theta), \quad \text{for } \, (x, \theta) \in  \OM \times S^1,
	\end{align}
	subject to  no incoming fluxes 
\begin{align}\label{uk_Gam-}
	v^k(x,\theta) &= 0, \quad \text{on $\Sigma_{-}$},
\end{align}	 
\end{subequations} 
where $\ds \Sigma_{-} := \set{ (\zeta, \theta) \in \del \OM \times S^1 \sep  \nu(\zeta)\cdot\theta < 0}$
with $\nu$ being the outer unit normal field at the boundary.

The outgoing fluxes  are given only on
$$\ds \Sigma_{+} := \set{ (\zeta, \theta) \in \Lambda \times S^1 \sep  \nu(\zeta)\cdot\theta > 0}.$$

The traces on $\Sigma_{+}$ of the solution of the 
boundary value problem \eqref{bvp_UK_transport}  
 are in a one-to-one correspondence with the momenta Doppler transform
$ \langle I^0 f, I^1 f \rangle$ 
via the relations
\begin{equation}\label{eq:gk-Ik}
	\begin{aligned}
		v^0 \lvert_{\Sigma_{+}}(\zeta,\theta) &=  I^0f  \lvert_{\Sigma_{+}}(\zeta,\theta),  &&\\ 
		v^1 \lvert_{\Sigma_{+}} (\zeta, \theta) &= (\zeta \cdot \theta) v^{0} \lvert_{\Sigma_{+}}(\zeta,\theta) - I^1f  \lvert_{\Sigma_{+}}(\zeta, \theta), &&
	\end{aligned}
\end{equation}	
see \cite[Proposition 2.1.]{fujiwaraOmogbheSadiqTamasan23}.

%

The traces  $v^k \lvert_{\Lambda  \times \sph}$ are determined by the data via 
\begin{align}\label{trace:vgk1}
	g^0(\zeta,\theta) = \begin{cases}
		I^0f(\zeta,\theta), \; & (\zeta,\theta) \in \Sigma_{+},\\
		0, \; & \text{otherwise}
	\end{cases}
	\; \text{and} \;
	g^1(\zeta,\theta) = \begin{cases}
		(\zeta\cdot\theta) I^0f(\zeta,\theta) - I^1f(\zeta,\theta), \; & (\zeta,\theta) \in \Sigma_{+},\\
		0, \; & \text{otherwise}.
	\end{cases}
\end{align}
 We work with the sequence of the Fourier coefficients of $v^k(z, \cdot)$ in the angular variable $\theta=(\cos\varphi,\sin\varphi)$, 
\begin{align}\label{eq:vkn}
	v^k_n(z)=\frac{1}{2\pi} \int_{-\pi}^{\pi} v^k(z,\theta ) e^{-i n\varphi}d\varphi, \quad k=0,1, \;n \in \Z.
\end{align}
The upper index $k$ denotes the level of the flux, while the lower index $n$ is the Fourier coefficient in the angular variable.

In terms of the Cauchy Riemann operators $\partial = (\partial_{x} - i\partial_{y})/2$ and
$\overline{\partial} = (\partial_{x} + i\partial_{y})/2$,
the advection operator becomes $\theta \cdot\nabla_z = e^{-i\varphi}\overline{\partial} + e^{i\varphi}\partial$.
By identifying the Fourier coefficients in \eqref{bvp_UK_transport}, the solution $v^k_{n}$'s  solve
\begin{subequations}\label{u12_sys}
	\begin{align} 	\label{source_syseq1}
		&\overline{\del} v^0_{0}(z)+\del v^0_{-2}(z) = \frac{ 1} {2} (f_{1}(z)+ i f_{2}(z)), \\ \label{infinite_sys_u0f1}
		&\dba v^0_{-n}(z) +\del v^0_{-n-2}(z)  =0,\qquad \qquad n\geq 1, \\ \label{Beltrami_u1Eq}
		&\ol{\del} v^1_{-n}(z) + \del v^1_{-n-2}(z) = v^{0}_{-n-1}(z), \quad n \in \Z,
	\end{align}      
	subject to 
			\begin{align}\label{trace:gkn2}
			v^k_{-n} \lvert_{\Lambda }= g^k_{-n}, \quad \text{for }  k =0,1,
	\end{align}
\end{subequations}




Since the vector field $f= \langle f_1, f_2 \rangle$ is real valued, the solution $v^k$ of  the boundary value problem \eqref{bvp_UK_transport}  is also  real valued, and its Fourier modes  $v^k_{-n}$ occur in conjugates,
\begin{align}\label{reality_ukn}
	v^k_{n} = \ol{v^k_{-n}}, \quad \text{ for }  \; n \geq 0, \; k=0,1. 
\end{align}	
Thus, it suffices to consider the non-positive Fourier modes of $v^k(z,\cdot)$. For $k =0,1$, let $$\bv^k:= \langle v^k_{0}, v^k_{-1}, v^k_{-2},  \cdots \rangle$$ be the sequence valued map of the non-positive Fourier coefficients of the solution  $v^k$  and $$\bg^k:= \bv^k \lvert_{\Lambda}= \langle g^k_{0}, g^k_{-1}, g^k_{-2},  \cdots \rangle$$ be its corresponding trace on the boundary.

%
%


In the sequence valued map notation the boundary value problem 
\eqref{u12_sys}  becomes
\begin{subequations}\label{Beltrami_uk_sys}
	\begin{align} \label{u0_f1Eq} 
		\overline{\del} v^0_{0}+\del v^0_{-2} &=  \frac{ 1} {2} (f_{1}+ i f_{2}),   \\ \label{Beltrami_u0Eq}
		\dba [\LL\bv^0] +\LL^2 \del [\LL \bv^0] &= \bzero , \\  \label{BukhgeimBeltrami_u1Eq}
		\dba\bv^1 +\LL^2 \del\bv^1 &= \LL \bv^{0}, 
	\end{align}
	subject to 
		\begin{align}\label{boldgk_vk}
			\bv^{k} \lvert_{\Lambda }&=\bg^{k}, \quad \text{for }  k =0,1,
	\end{align}
\end{subequations} where $\mathcal{L}$ denotes the left translation operator $\mathcal{L}\bv= \mathcal{L} (v_0,v_{-1},v_{-2},...):=(v_{-1},v_{-2},...)$.

The system of Bukhgeim-Beltrami equations  \eqref{Beltrami_uk_sys} are
of type 
\begin{align}\label{BukhBeltsimple1}
	\dba\bv +\mathcal{L}^2 \del\bv = \bw,
\end{align} 
 and solution of such a system can be expressed in terms of two operators 
%
defined component-wise for $n\geq 0$ by
\begin{align} \label{BukhgeimCauchyFormula}
	(\B \bv)_{-n}(z) &:= \frac{1}{2\pi i} \int_{\del \OM}
	\frac{ v_{-n}(\zeta)}{\zeta-z}d\zeta  
	+ \frac{1}{2\pi i} \sum_{j=1}^{\infty}   \int_{\del \OM} \left \{ \frac{d\zeta}{\zeta-z}-\frac{d \ol{\zeta}}{\ol{\zeta}-\ol{z}} \right \} 
	v_{-n-2j}(\zeta)
	\left( \frac{\ol{\zeta}-\ol{z}}{\zeta-z} \right) ^{j},  \\  	   \label{T_Formula}
	(\BT \bw)_{-n}(z) &:=  -\frac{1}{ \pi } \sum_{j=0}^{\infty} \int_{\OM}    w_{-n-2j} (\zeta)  \frac{1}{\zeta-z} \left( \frac{\ol{\zeta}-\ol{z}}{\zeta-z} \right) ^{j}
	d\xi d\eta, \quad \zeta = \xi +i \eta, \quad z \in \OM,
\end{align} 
see details in \cite[Theorem 3.1]{sadiqTamasan01}, \cite[Proposition B.1.]{fujiwaraOmogbheSadiqTamasan23}.

Solution of \eqref{BukhBeltsimple1} satisfy an Abel-type result.
\begin{proposition}\label{prop1}
	\label{prop_Bukhgeimpompeiu}
	Let $\Omega$ be a bounded convex domain,  $\B$ and $\BT$ be the operators in  \eqref{BukhgeimCauchyFormula}, respectively,
	\eqref{T_Formula}, and $\bw \in  C(\ol\OM; l^1)$.
	If $\bv \in C^1(\OM; l^1)\cap C(\ol\OM; l^1)$ solves \eqref{BukhBeltsimple1}.
	Then $\B [\bv \lvert_{\Gam}] \in C(\ol\OM; l_\INF)$, $\BT \bw \in C(\ol\OM; l_{\INF})$, and 
	\begin{align} \label{BP_Formula_vk}
		\bv(z) &= 
		\B [\bv \lvert_{\Gam}](z) +	(\BT \bw)(z), 
	\end{align}
	holds component-wise for every $z \in \OM$.
	Moreover, 
	for each $n\geq 0$:
	\begin{equation}
		\begin{aligned}\label{limit_BP1}
			\underset{\OM \ni z \to z_{0} \in \del \OM }{\lim} v_{-n}(z)
			&= 	
			\frac{1}{2\pi i} \int_{\del \OM}
			\frac{ v_{-n}(\zeta)}{\zeta-z_0}d\zeta  +\frac{1}{2} v_{-n}(z_0) 
			\\ &\qquad 
			+ \frac{1}{2\pi i}\sum_{j=1}^{\infty}  \int_{\del \OM} \left \{ \frac{d\zeta}{\zeta-z_0}-\frac{d \ol{\zeta}}{\ol{\zeta}-\ol{z_0}} \right \} 
			v_{-n-2j}(\zeta)
			\left( \frac{\ol{\zeta}-\ol{z_0}}{\zeta-z_0} \right) ^{j} \\
			& \qquad  -\frac{1}{ \pi } \sum_{j=0}^{\infty} \iint_{\OM}    \frac{w_{-n-2j} (\zeta)  }{\zeta-z_0} \left( \frac{\ol{\zeta}-\ol{z_0}}{\zeta-z_0} \right) ^{j}d\xi d\eta, \qquad \zeta = \xi +i \eta.
		\end{aligned}
	\end{equation}
	
\end{proposition}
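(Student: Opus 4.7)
The strategy is to verify the representation \eqref{BP_Formula_vk} by establishing three independent properties whose combination, together with a uniqueness argument, forces the formula to hold component-wise. Namely: (i) the Bukhgeim-Cauchy integral $\B[\bv|_\Gamma]$ produces an $\mathcal L^2$-analytic map in $\Omega$, i.e., it solves the \emph{homogeneous} system $\dba \cdot + \LL^2\partial\, \cdot = \bzero$; (ii) the Bukhgeim-Pompeiu operator $\BT\bw$ is a particular solution of the \emph{inhomogeneous} system $\dba \cdot + \LL^2\partial\, \cdot = \bw$; and (iii) the boundary traces of both operators are controlled by Plemelj-Sokhotski type identities for weakly singular Cauchy kernels twisted by the real-analytic factor $((\ol\zeta-\ol z)/(\zeta-z))^j$. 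Granting (i)--(iii), the difference $\bv - \B[\bv|_\Gamma] - \BT\bw$ is $\mathcal L^2$-analytic in $\Omega$ with vanishing trace on $\Gamma=\partial\Omega$, so Bukhgeim's uniqueness theorem for $A$-analytic maps forces it to vanish identically.

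The concrete order of steps is as follows. \textbf{Step 1 (mapping properties).} Under the hypothesis $\bv\in C(\ol\Omega; l^1)$ and $\bw\in C(\ol\Omega; l^1)$, bound each component $(\B[\bv|_\Gamma])_{-n}$ and $(\BT\bw)_{-n}$ by $\ell^1$-summable majorants in $n$ using $|(\ol\zeta-\ol z)/(\zeta-z)|\le 1$ on $\ol\Omega$ (since $\Omega$ is convex, the weakly singular integrals $\int_\Omega |\zeta-z|^{-1}dA$ are uniformly bounded). Dominated convergence then yields continuity up to $\ol\Omega$ into $l_\infty$. \textbf{Step 2 (particular solution).} Apply $\dba_z + \LL^2\partial_z$ to $\BT\bw$ component-wise. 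Differentiating under the integral (permitted away from $\zeta=z$) and using the key identity $\dba_z \bigl((\ol\zeta-\ol z)^{j+1}/(j+1)(\zeta-z)^{j+1}\bigr) = -(\ol\zeta-\ol z)^j/(\zeta-z)^{j+1}$ to exchange consecutive terms in the series, one obtains the generalized $\dba$-Pompeiu identity $\dba(\BT\bw)_{-n}+\partial(\BT\bw)_{-n-2}=w_{-n}$. \textbf{Step 3 (homogeneous boundary potential).} The analogous computation for $\B[\bv|_\Gamma]$, applied now to line integrals and exploiting the identity $d\zeta/(\zeta-z) - d\ol\zeta/(\ol\zeta-\ol z)$ as the full differential of $\log|\zeta-z|^2$ restricted to $\partial\Omega$, gives cancellations that yield $\dba(\B[\bv|_\Gamma])_{-n}+\partial(\B[\bv|_\Gamma])_{-n-2}=0$ in $\Omega$. \textbf{Step 4 (boundary limits).} The Plemelj-Sokhotski formula applied to the classical Cauchy kernel produces the jump term $\tfrac12 v_{-n}(z_0)$; for the remaining twisted terms $(\ol\zeta-\ol z)^j/(\zeta-z)^j$ vanish at $\zeta=z_0$ to order $j\ge 1$, so no additional jumps arise and the formula \eqref{limit_BP1} follows from \eqref{BP_Formula_vk} by passing to the limit.

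The main technical obstacle is Step 2: one must justify term-by-term differentiation of the infinite series defining $\BT\bw$, prove the validity of the interchange of the partial derivatives with the integral in the presence of the weakly singular kernel, and perform the cascade of cancellations that re-indexes $j \mapsto j-1$ in the series so that only $w_{-n}$ survives on the right-hand side. This is essentially a matrix-$\dba$ computation in the presence of the left-shift $\LL$, whose rigorous justification requires the regularity $\bw \in C(\ol\Omega; l^1)$ precisely to control the tail of the series uniformly in $z\in\Omega$. The remaining steps are standard adaptations of the Cauchy-Pompeiu-Plemelj machinery, and the detailed calculations can be carried out as in \cite[Theorem~3.1]{sadiqTamasan01} and \cite[Proposition~B.1]{fujiwaraOmogbheSadiqTamasan23}.
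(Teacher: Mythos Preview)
Your overall strategy has a circularity that you need to address. You propose to conclude \eqref{BP_Formula_vk} by showing that the difference $\bu:=\bv-\B[\bv|_\Gamma]-\BT\bw$ is $\LL^2$-analytic with \emph{vanishing trace} on $\partial\Omega$, and then invoke a uniqueness theorem. But nothing in your Steps~1--4 establishes that the trace of $\bu$ vanishes. By the Plemelj jump in Step~4, the interior boundary value of $\B[\bv|_\Gamma]$ at $z_0$ is $\tfrac12 v_{-n}(z_0)$ plus the principal-value integrals, while $\BT\bw$ has a generically nonzero continuous trace. So the trace of $\bu$ at $z_0$ equals $\tfrac12 v_{-n}(z_0)-(\text{PV terms})-(\BT\bw)_{-n}(z_0)$, and asserting this is zero is exactly the boundary identity \eqref{limit_BP1} you are trying to derive \emph{from} \eqref{BP_Formula_vk}. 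The uniqueness route therefore presupposes what it is meant to prove. The paper (via the references you cite at the end) avoids this by proving \eqref{BP_Formula_vk} \emph{directly}: one applies the scalar Cauchy--Pompeiu/Stokes formula component-wise to the functions $\zeta\mapsto v_{-n-2j}(\zeta)\bigl(\tfrac{\ol\zeta-\ol z}{\zeta-z}\bigr)^{j}$ and sums in $j$, which produces the telescoping cancellations you describe in Step~2 but now on the level of the representation formula itself, with no appeal to uniqueness.

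A secondary point: in Step~4 your justification for the absence of jump contributions from the $j\ge 1$ terms is not correct. The factor $\bigl(\tfrac{\ol\zeta-\ol z}{\zeta-z}\bigr)^{j}$ has modulus one and does not ``vanish to order $j$'' at $\zeta=z_0$. The real reason those terms extend continuously to the boundary is that the kernel $\frac{d\zeta}{\zeta-z}-\frac{d\ol\zeta}{\ol\zeta-\ol z}=2i\,\Impart\bigl(\frac{d\zeta}{\zeta-z}\bigr)$ is, when restricted to a smooth curve through $z_0$, bounded near $\zeta=z_0$ (its leading singular part is real and cancels). This is exactly the content of \cite[Proposition~2.2]{sadiqTamasan01} invoked in the paper. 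Similarly, for the continuity of $\BT\bw$ up to $\ol\Omega$ the paper rewrites the area integral in polar coordinates centered at $z$ (see \eqref{T_polar}), which removes the apparent singularity and makes the continuity transparent via uniform continuity of the ray-length function $L(z,\varphi)$; your dominated-convergence argument with the bound $\int_\Omega|\zeta-z|^{-1}d\xi d\eta\le C$ gives boundedness but needs more care to yield continuity at boundary points.
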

\begin{proof}
	The regularity of $\B [\bv \lvert_{\Gam}] $  follows directly from its  definition \eqref{BukhgeimCauchyFormula}, as shown in \cite[Theorem 3.1]{sadiqTamasan01}. 
	
	The formula \eqref{BP_Formula_vk} is shown in  	\cite[Proposition B.1.]{fujiwaraOmogbheSadiqTamasan23}. The fact that $$ \displaystyle  		\underset{\OM \ni z \to z_{0} \in \del \OM }{\lim} 	\B [\bv \lvert_{\Gam}](z) = 	\B [\bv \lvert_{\Gam}](z_0) + \frac{1}{2}\bv(z_0)$$ follows from the Sokhotski-Plemelj formula (e.g \cite{muskhellishvili}) and   \cite[Proposition 2.2]{sadiqTamasan01}.
	
	To justify the continuity upto the boundary of $\BT \bw$, we work in polar coordinates. Let $\zeta (z,\varphi) = z+\rho e^{i \varphi}$, $ 0 \leq \varphi \leq 2 \pi$ and $0 \leq \rho \leq L(z, \varphi)$, where $L(z,\varphi)$ is the length of the ray emanating from $z$ in the direction $\varphi$, then 
	\begin{align}\label{T_polar}
		\displaystyle 	(\BT \bw)_{-n}(z) 
		& = -\frac{1}{\pi}\sum_{j=0}^\infty\int_0^{2\pi}\int_0^{L(z,\varphi) } w_{-n-2j} (z+\rho e^{i\varphi})e^{-i(2j+1)\varphi} d \rho d\varphi.
	\end{align} 
	Since $L(z,\varphi)$ is uniformly continuous in $ \ol\OM\times [0,2\pi]$ 
	as shown in \cite[Proposition 2.2]{sadiqTamasan01} and $\bw \in  C(\ol\OM; l^1)$, 
	the regularity of $\BT \bw \in C(\ol\OM; l_{\infty})$  follows. Moreover, 	for each $n \geq 0$, 
	$$\ds \lim_{\OM \ni z \rightarrow z_0 \in \del\OM}	(\BT \bw)_{-n}(z) = 	(\BT \bw)_{-n}(z_0).$$

\end{proof}

We next show the unique determination of traces of solution $\bv$ of \eqref{BukhBeltsimple1}  from the arc $\Lambda$ to the inner chord $L$. The result  is a generalization of  \cite[Theorem 3.1.]{fujiwaraSadiqTamasan21}.

Given the sequences $\bv |_\Lambda$ and $\bw|_{\OM^+}$,  let us define for each $n\geq 0$, the functions $	F[\bv |_\Lambda, \bw]_{-n}$ on $L$ except at
the endpoints, by 
	\begin{align} \nonumber
	F[\bv |_\Lambda, \bw]_{-n}(z) &:= \frac{1}{i \pi }\int_\Lambda\frac{v_{-n}(\zeta)}{\zeta-z}d\zeta 
					+\frac{1}{i \pi } \sum_{j=1}^\infty \int_\Lambda\left\{\frac{d\zeta}{\zeta-z}-\frac{d\ol\zeta}{\ol\zeta- z}\right\} v_{-n-2j}(\zeta)\left(\frac{\ol\zeta- z}{\zeta-z}\right)^j
	\\	\label{F1nc}
	&\quad  	-\frac{2}{ \pi } \sum_{j=0}^{\infty} \iint_{\OM^+}    \frac{w_{-n-2j} (\xi,\eta)  }{(\xi-z) +i \eta} \left( \frac{(\xi-z) -i \eta}{(\xi-z) +i \eta} \right) ^{j}d\xi d\eta,  			 
	\quad z \in L.
\end{align}	


For the result below, let $H_s$ denote the finite  Hilbert transform of functions on $(-s,s)$,
			\begin{equation}\label{eq:finiteHilbertTransform}
				H_{s}[f](x) = \frac{1}{\pi}\pv\int_{-s}^{s} \frac{f(y)}{x-y}\:dy.
			\end{equation} 


	\begin{proposition} \label{newAnalyticprop}
	Let $\OM\subset\mathbb{R}^2$ be a strictly convex domain, $\Lambda$ be an arc of its boundary, and $L$ be the chord  joining the endpoints of $\Lambda$. Let $\Omega^+=\co(\Lambda)\cap\Omega$ be the convex hull of $\Lambda$,
and $\bw \in  C(\ol{\OM^+}; l^1)$  be given. 
If  $\bv \in C^1(\OM^+; l^1)\cap C(\ol{\OM^+}; l^1)$  
is the solution of \eqref{BukhBeltsimple1}, 
	then its trace $\bv \lvert_{L}$  on $L$ is recovered pointwise from the data $\bv \lvert_{\Lambda}$ and $\bw$ as follows:  for each $n\geq 0$, $v_{-n} \big \lvert_{L}$ is the unique solution in $L^2(-s,s)$ of  
	\begin{align}\label{Pminus_un}
		[I - \i H_s](v_{-n})(x)=& 	F[\bv |_\Lambda, \bw]_{-n}(x), \quad x \in L,
	\end{align}
		where $F[\bv |_\Lambda, \bw]_{-n}$
		is given by \eqref{F1nc}.

\end{proposition}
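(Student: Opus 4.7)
The plan is to apply Proposition~\ref{prop1} on the subdomain $\Omega^{+}$, whose boundary decomposes as $\partial\Omega^{+}=\Lambda\cup L$, push the representation formula to the boundary at an interior point $x\in(-s,s)\subset L$, and exploit the fact that $L$ lies on the real axis to collapse the bulk of the chord contribution.

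First I would specialize the boundary identity \eqref{limit_BP1} to $\Omega^{+}$ and let $z\to x\in(-s,s)$. Since $x$ is a smooth point of $\partial\Omega^{+}$ (the corners are only at $\pm s$), the $\tfrac{1}{2}$-Sokhotski--Plemelj jump in \eqref{limit_BP1} is legitimate; moving the $\tfrac{1}{2}v_{-n}(x)$ term to the left and multiplying by $2$ yields a single expression for $v_{-n}(x)$ as a sum of Cauchy-type boundary integrals over $\partial\Omega^{+}$ and a volume integral on $\Omega^{+}$. Splitting each $\int_{\partial\Omega^{+}}$ into $\int_{\Lambda}+\int_{L}$, the $\Lambda$-pieces together with the volume terms reproduce exactly the definition \eqref{F1nc} of $F[\bv|_{\Lambda},\bw]_{-n}(x)$ (using $\bar x=x$). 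For the chord contribution I would use that any $\zeta=\xi\in L$ satisfies $\bar\zeta=\zeta$ and $d\bar\zeta=d\zeta$; combined with $x\in\mathbb{R}$, this forces $\tfrac{d\zeta}{\zeta-x}-\tfrac{d\bar\zeta}{\bar\zeta-x}\equiv 0$ on $L$, which annihilates the entire $j\geq 1$ series over $L$. The only surviving $L$-term is the leading Cauchy integral, and a direct computation using $\tfrac{1}{\xi-x}=-\tfrac{1}{x-\xi}$ with \eqref{eq:finiteHilbertTransform} gives
\[
\frac{1}{\pi i}\,\pv\!\int_{L}\frac{v_{-n}(\zeta)}{\zeta-x}\,d\zeta \;=\; i\,H_{s}[v_{-n}](x).
\]
Collecting the pieces yields $v_{-n}(x)=F[\bv|_{\Lambda},\bw]_{-n}(x)+iH_{s}[v_{-n}](x)$, i.e.\ \eqref{Pminus_un}.

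For uniqueness in $L^{2}(-s,s)$, I would use a classical Cauchy integral argument. Suppose $u\in L^{2}(-s,s)$ satisfies $[I-iH_{s}]u=0$ and set $U(z)=\frac{1}{2\pi i}\int_{-s}^{s}\frac{u(t)}{t-z}\,dt$ for $z\in\mathbb{C}\setminus[-s,s]$. The Sokhotski--Plemelj relations $U_{+}(x)-U_{-}(x)=u(x)$ and $U_{+}(x)+U_{-}(x)=iH_{s}[u](x)$ on $(-s,s)$ combine with the hypothesis to force $U_{-}\equiv 0$ on $(-s,s)$. Since $U$ belongs to the Hardy space of the lower half-plane and has vanishing non-tangential boundary value on a set of positive measure, the Luzin--Privalov (equivalently F.~and M.~Riesz) uniqueness theorem gives $U\equiv 0$ in the lower half-plane, hence on all of $\mathbb{C}\setminus[-s,s]$ by analytic continuation in this connected domain; consequently $u=U_{+}-U_{-}=0$ almost everywhere.

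The main obstacle is the careful orientation and $\tfrac{1}{2}$-jump bookkeeping of \eqref{limit_BP1} on the corner domain $\partial\Omega^{+}=\Lambda\cup L$: the sign of the $iH_{s}$ term is sensitive to the direction in which $L$ is traversed within $\partial\Omega^{+}$, and justifying the interchange of the boundary limit with the infinite series requires the uniform modulus bound $|(\bar\zeta-x)/(\zeta-x)|=1$ for $x\in L$ and $\zeta\in\overline{\Omega^{+}}$ (valid because $\bar x=x$) together with the regularity hypothesis $\bv,\bw\in C(\overline{\Omega^{+}};l^{1})$.
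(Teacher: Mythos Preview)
Your derivation of \eqref{Pminus_un} is correct and follows exactly the paper's route: apply the Bukhgeim--Pompeiu representation \eqref{BP_Formula_vk}/\eqref{limit_BP1} on $\Omega^{+}$, take the boundary limit at $x\in L$, split $\partial\Omega^{+}=\Lambda\cup L$, and use $\bar\zeta=\zeta$, $d\bar\zeta=d\zeta$ on $L$ to kill the $j\geq 1$ chord terms, leaving only the principal Cauchy integral which becomes $iH_{s}[v_{-n}]$.

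The one genuine difference is in the uniqueness step. The paper simply invokes the spectral fact that $i$ is not an eigenvalue of $H_{s}$ on $L^{2}(-s,s)$, citing \cite{koppelmanPincus58,widom60}. Your argument is instead a self-contained Hardy-space proof: form the Cauchy transform $U$ of a putative null solution, use Plemelj to conclude $U_{-}\equiv 0$ on $(-s,s)$, then apply Luzin--Privalov in the lower half-plane and analytic continuation across $\mathbb{R}\setminus[-s,s]$ to get $U\equiv 0$. Both are valid; yours avoids an external spectral reference at the cost of invoking boundary-uniqueness for Hardy/Nevanlinna functions, while the paper's citation is shorter but blackboxes the mechanism. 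Your closing remarks about orientation bookkeeping and the uniform bound $|(\bar\zeta-x)/(\zeta-x)|=1$ for $x\in L$ are apt and slightly more explicit than the paper's treatment.
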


\begin{proof}
	Since the right hand side of the non-homogeneous Bukhgeim-Beltrami \eqref{BukhBeltsimple1} is known, along with its trace $\bg := \bv \lvert_{\Lambda}$ given, we have 
	\begin{subequations} \label{Dirichlet_inhomDbar1}
		\begin{align} \label{dbaU_eq}
			\dba v_{-n} +\del v_{-n-2}&= w_{-n} , \quad n \geq 0,\\ 
			\label{u_Lambda1} v_{-n} \lvert_{\Lambda} &= g_{-n}.
		\end{align}
	\end{subequations} 
	
	We solve \eqref{Dirichlet_inhomDbar1} for $n \geq 0$,  with partial boundary data on $\Lambda$, via the Bukhgeim-Pompeiu formula \eqref{BP_Formula_vk}:
		\begin{align} \nonumber
			v_{-n}(z) &:= \frac{1}{2\pi i} \int_{\del \OM^+}
			\frac{ v_{-n}(\zeta)}{\zeta-z}d\zeta  
			+ \frac{1}{2\pi i}\sum_{j=1}^\infty \int_{\del \OM^+} \left \{ \frac{d\zeta}{\zeta-z}-\frac{d \ol{\zeta}}{\ol{\zeta}-\ol{z}} \right \} 
			v_{-n-2j}(\zeta)
			\left( \frac{\ol{\zeta}-\ol{z}}{\zeta-z} \right) ^{j} \\ \label{v1_BP1}
			& \quad  -\frac{1}{ \pi } \sum_{j=0}^{\infty} \int_{\OM^+}    \frac{w_{-n-2j} (\zeta)  }{\zeta-z} \left( \frac{\ol{\zeta}-\ol{z}}{\zeta-z} \right) ^{j}d\xi d\eta, \qquad \zeta = \xi +i \eta,   \quad z \in \OM^+.
		\end{align}
	However, for the boundary condition  \eqref{u_Lambda1} to be satisfied 
	the following compatibility condition needs to hold: By taking the non-tangential limit $\ds\OM^+ \ni z \rightarrow z_0 \in \del\OM^+$ in \eqref{v1_BP1} and using Proposition \ref{prop_Bukhgeimpompeiu}
	yields that the trace must satisfy 
		\begin{align}\nonumber 
			v_{-n}(z_0) &:= \frac{1}{2\pi i} \int_{\del \OM^+}
			\frac{ v_{-n}(\zeta)}{\zeta-z_0}d\zeta  +\frac{1}{2} v_{-n}(z_0) 
			+ \frac{1}{2\pi i}\sum_{j=1}^\infty \int_{\del \OM^+} \left \{ \frac{d\zeta}{\zeta-z_0}-\frac{d \ol{\zeta}}{\ol{\zeta}-\ol{z_0}} \right \} 
			v_{-n-2j}(\zeta)
			\left( \frac{\ol{\zeta}-\ol{z_0}}{\zeta-z_0} \right) ^{j} \\ \label{Lim_v1_BP2}
			& \quad  -\frac{1}{ \pi } \sum_{j=0}^{\infty} \iint_{\OM^+}    \frac{w_{-n-2j} (\zeta)  }{\zeta-z_0} \left( \frac{\ol{\zeta}-\ol{z_0}}{\zeta-z_0} \right) ^{j}d\xi d\eta, \; \zeta = \xi + i \eta,   \quad z_0 \in \del \OM^+.
		\end{align}
	
	In our inverse problem this compatibility condition is already satisfied for $z_0\in \Lambda$. We use this compatibility condition for $z_0\in L$, to recover the missing boundary data $v_{-n}|_L$.
	More precisely, we get 
		\begin{align*}
			[I - i H_s]v_{-n}(z_0) &= 
			\frac{1}{\pi i}\sum_{j=1}^\infty\int_{L} \left \{ \frac{d\zeta}{\zeta-z_0}-\frac{d \ol{\zeta}}{\ol{\zeta}-z_0} \right \} 
			v_{-n-2j}(\zeta)
			\left( \frac{\ol{\zeta}-z_0}{\zeta-z_0} \right) ^{j} 
			\\ 
			& \;+
			\frac{1}{\pi i}\int_{\Lambda} \frac{v_{-n}(\zeta)}{\zeta-z_0}d \zeta 
			+ \frac{1}{\pi i}\sum_{j=1}^\infty\int_{\Lambda} \left \{ \frac{d\zeta}{\zeta-z_0}-\frac{d \ol{\zeta}}{\ol{\zeta}-z_0} \right \} 
			v_{-n-2j}(\zeta)
			\left( \frac{\ol{\zeta}-z_0}{\zeta-z_0} \right) ^{j} \\ 
			& \;  -\frac{2}{ \pi } \sum_{j=0}^{\infty} \iint_{\OM^+}    \frac{w_{-n-2j} (\xi,\eta)  }{(\xi-z_0) +\i \eta} \left( \frac{(\xi-z_0) -i \eta}{(\xi-z_0) +i \eta} \right) ^{j}d\xi d\eta, \quad  z_0 \in L,
		\end{align*}
	where $H_s$ is the finite  Hilbert transform in \eqref{eq:finiteHilbertTransform}. 		Since the first integral  ranges over the reals, it vanishes, and $v_{-n}$ solves \eqref{Pminus_un}.

The equation \eqref{Pminus_un} has a unique solution since $i$ is not in the point spectrum of $H_s$ (e.g., \cite{koppelmanPincus58, widom60}), thus allowing one to invert $	[I - i H_s]$ in its range. 
\end{proof}

\subsection{ Reconstruction method.}

Given the data $\langle I^0 f,I^1 f\rangle $ on lines intersecting $\Lambda $, 	we use \eqref{boldgk_vk} to first  determine $\bv^0 \lvert_{\Lambda}, \bv^1 \lvert_{\Lambda}$ on $\Lambda$.

	\begin{itemize}
	\item  {\bf  The recovery of the sequence $\LL \bv^0 $.}

        From $\bv^0 \lvert_{\Lambda}$ we compute    via \eqref{F1nc} 
		the	function $	F[\bv^0|_\Lambda, \bzero]_{-n}$ for each $n\geq 1$.

       For each $n\geq 1$, we recover the trace $v^0_{-n} \big \lvert_{L}$ as the unique solution of
		\begin{align}\label{Pminus_v0n}
			[I - i H_s](v^0_{-n})(x)=&  	F[\bv^0|_\Lambda, \bzero]_{-n}(x), \quad x \in L.
		\end{align} 
		
        By using the Bukhgeim-Cauchy formula \eqref{BukhgeimCauchyFormula}, we extend $\LL\bv^0$ from $\Lambda\cup L$ to $\OM^+$:
		\begin{align}\label{uzero_mODD_B}
			\LL\bv^0 :=  \B (\LL \bv^0 \lvert_{\Lambda \cup L} ), \quad \text{in } \OM^+.  
		\end{align}
		Thus, $v^0_{-n}$ for $n \geq 1$ is recovered  inside $\OM^+$. 
		Note that the mode $v_0^0$ is not yet determined. 
	
	\item  {\bf The recovery of the entire  sequence $\bv^1 $.}
	\vspace{0.2cm}
	
	
%
%

		From data $\bv^1 \lvert_{\Lambda}$ and $\LL\bv^0$ in \eqref{uzero_mODD_B}, we use \eqref{F1nc} to compute  $	F[\bv^1|_\Lambda, \LL \bv^0]_{-n}$ for each $n\geq 0$.

For each $n\geq 0$, the trace $v^1_{-n} \big \lvert_{L}$ is recovered as the unique solution of
\begin{align}\label{Pminus_un4}
	[I - i H_s](v^1_{-n})(x)=& 	F[\bv^1|_\Lambda, \LL \bv^0]_{-n}(x), \quad x \in L.
\end{align}

   Next, we determine the entire sequence 	$\bv^1$ in $\OM^+$ by using the Bukhgeim-Pompeiu formula \eqref{BP_Formula_vk}:
   \begin{align}\label{bv1}
   	\bv^1 :=  \B (\bv^1 \lvert_{\Lambda \cup L} ) +\BT(\LL \bv^0 ), \quad \text{in } \OM^+.  
   \end{align}


	\item {\bf The reconstruction of the Fourier mode $v^0_{0}$.}
	\vspace{0.2cm}

	The real valued  mode $v^0_{0}$ is determined via \eqref{Beltrami_u1Eq} for $n=1$, and the complex conjugate relation \eqref{reality_ukn},
	\begin{align}\label{eq:mODD_u00}
		v^0_{0}(z) &:=  \ol \del v^1_{1}(z) +  \del v^1_{-1}(z) =2 \Repart \del v^1_{-1}(z), \quad z \in \OM^+.
	\end{align}
	
	Note that \eqref{uzero_mODD_B} and \eqref{eq:mODD_u00} now yield the entire sequence $\bv^0$ in $\OM^+$.
	
	\vspace{0.2cm}
	\item	{\bf  The recovery of the vector field.}
	\vspace{0.2cm}
	
	From  the mode $v^0_{-2}$ in \eqref{uzero_mODD_B} 
	and the mode $v^0_{0}$ in \eqref{eq:mODD_u00} we 
	 recover  the vector field inside by
	\begin{align}\label{F_defn}
		f(z) :=\left \langle 2 \Repart{ \left\{ \overline{\del} v^0_{0}(z) +\del v^0_{-2}(z) \right\}}, 2 \Impart{ \left\{ \overline{\del} v^0_{0}(z) +\del v^0_{-2}(z) \right\}} \right  \rangle,  \quad z \in \OM^+.
	\end{align}
	
	\qed
\end{itemize}

\begin{remark}\label{remark1}
	The occurring series in the reconstruction method requires justification for their convergence.
	This follows from the assumed regularity of the vector field $f$.
	More precisely, since $f$ is assumed in $ C_0^{1,\mu}(\OM; \BR^2), \; 1/2<\mu <1$, the  solution $v^k$ of system \eqref{bvp_UK_transport} lies in 
	$C^{1,\mu}(\ol\OM \times \sph)$. 
	In particular, the traces $v^k \lvert_{\Lambda  \times \sph} \in C^{1,\mu}(\Lambda;C^{1,\mu}(\sph))$ and \cite[Proposition 4.1 (i)]{sadiqTamasan01} applies to yield the  regularity in the sequence valued data  $\bg^{k} \in l^{1,1}_{\INF}(\Lambda)\cap C^{1,\mu}(\Lambda;l_1)$, for $k =0,1$. We refer to 	\cite{sadiqTamasan01} for the definition of the spaces.  	This induced regularity in the data suffices for the equations \eqref{Pminus_v0n} - \eqref{F_defn} in the reconstruction method to make sense, and have a unique solution. 
\end{remark}


\section{Reconstruction Algorithm}\label{sec:Alg}
This section presents a numerical algorithm 
based on the reconstruction method above.

Let $\{ \zeta(\omega) \:;\: \alpha_{0} < \omega < \alpha_{K} \}$ be
 a smooth parametrization of  $\Lambda$ with $s=\zeta(\alpha_{0})$ and $-s=\zeta(\alpha_{K})$. The arc is further divided into $K$ sub-arcs associated with the partition $\alpha_{0} < \alpha_{1} < \dots < \alpha_{K}$. For $1 \leq k \leq K$, let  $\Delta\omega_k = \alpha_{k} - \alpha_{k-1}$ denote the interval width.
At the midpoint $\omega_k = (\alpha_{k-1} + \alpha_{k})/2$ of each interval,
we write $\zeta_k = \zeta(\omega_k)$ and $\zeta'_k = \zeta'(\omega_k)$.
Let $\Delta\varphi = \dfrac{2\pi}{N}$,  $\varphi_n = \left(n-\dfrac{1}{2}\right)\Delta\varphi$ for $1 \leq n \leq N$, and $\theta_n = (\cos \varphi_n, \sin \varphi_n)$.

The data is sampled on $\Lambda$ with respect to the outgoing direction; more specifically, our measurement data consists of
\begin{align*}
	&\bigl\{ I^0f(\zeta_k, \theta_n) \:;\: 1 \leq k \leq K, 1 \leq n \leq N, (\zeta_k,\theta_n) \in \Sigma_{+}\bigr\}
	\intertext{and}
	&\bigl\{ I^1f(\zeta_k, \theta_n) \:;\: 1 \leq k \leq K, 1 \leq n \leq N, (\zeta_k,\theta_n) \in \Sigma_{+}\bigr\}.
\end{align*}

Let $M$ be a positive integer to truncate the occurring Fourier series.
For a positive integer $J$, we write $\Delta x = \dfrac{2s}{J}$ and
equi-spaced points on $L=(-s,s)$ by $x_j = -s + \left(j-\dfrac{1}{2}\right)\Delta x$,
$1 \leq j \leq J$.
Let 
\begin{equation*}
	\ds G =
	\begin{cases} 
		\ds  \{ G_{m}(\zeta_k),G_{m-2}(\zeta_k),\dots,G_{-2M}(\zeta_k) \:;\: 1 \leq k \leq K \}, & \text{if } m \text{ is even}, \\
		\ds\{ G_{m}(\zeta_k),G_{m-2}(\zeta_k),\dots,G_{-2M+1}(\zeta_k) \:;\: 1 \leq k \leq K \}, & \text{if } m \text{ is odd},
	\end{cases}
\end{equation*}
and 
\begin{equation*}
	\ds V_L =
	\begin{cases} 
		\ds  \{ V_{m}(x_j),V_{m-2}(x_j),\dots,V_{-2M}(x_j) \:;\: 1 \leq j \leq J \}, & \text{if } m  \text{ is even}, \\
		\ds\{ V_{m}(x_j),V_{m-2}(x_j),\dots,V_{-2M+1}(x_j) \:;\: 1 \leq j \leq J \}, & \text{if } m \text{ is odd}.
	\end{cases}
\end{equation*}


The proposed algorithms are described with operators for $-1 \ge m \ge -2M$:
\begin{multline*}
	\mathcal{F}_{m}\bigl[ G \bigr](x)
	= -\dfrac{1}{\pi i}\sum_{k=1}^K \dfrac{G_m(\zeta_k)}{x-\zeta_k}\zeta'_k\Delta\omega_k
	\\
	+ \dfrac{2}{\pi} \sum_{k=1}^{K}
	\left\{\sum_{m-2 \ge m-2j \ge -2M} G_{m-2j}(\zeta_k)\left(\dfrac{\overline{\zeta_k}-x}{\zeta_k-x}\right)^j\right\}
	\Impart\left(\dfrac{\zeta'_k}{\zeta_k-x}\right)\Delta\omega_k,
	\quad x \in L,
\end{multline*}
and   the discretization of the $\mathcal{B}$ operator in  \eqref{BukhgeimCauchyFormula}  for $-1 \ge m \ge -2M$:
\begin{multline} \label{eq:opB}
	\mathcal{B}_{m}\bigl[ V_L, G \bigr](c)
	= \dfrac{1}{2\pi i}\sum_{\ell=1}^{J} \dfrac{V_m(x_\ell)}{x_\ell - c}\Delta x
	+ \dfrac{1}{2\pi i}\sum_{k=1}^{K} \dfrac{G_{m}(\zeta_k)}{\zeta_k-c}\zeta_k'\Delta\omega_k
	\\
	+ \dfrac{1}{2\pi i}\sum_{\ell=1}^{J} \left\{\sum_{m-2\ge m-2j \ge -2M} V_{m-2j}(x_\ell)\left(\dfrac{x_\ell-\overline{c}}{x_\ell-c}\right)^j\right\}\left(\dfrac{1}{x_\ell-c}-\dfrac{1}{x_\ell-\overline{c}}\right)\Delta x
	\\
	+ \dfrac{1}{\pi}\sum_{k=1}^{K} \left\{\sum_{m-2\ge m-2j \ge -2M} G_{m-2j}(\zeta_k)\left(\dfrac{\overline{\zeta_k}-\overline{c}}{\zeta_k-c}\right)^j\right\}\Impart\left(\dfrac{\zeta_k'}{\zeta_k-c}\right)\Delta\omega_k,
	\quad c \in \Omega^{+}.
\end{multline}

\begin{step}\label{step:triangulation}
	Introduce an inscribed polygonal domain $\Omega^{+}_\Delta \approx \Omega^{+}$
	whose closure includes $L$.
	We also take a triangulation $\{ \tau_s \:;\: 1 \leq s \leq S \}$ of $\Omega^{+}_\Delta$,
	i.e.\ each $\tau_s$ is a triangular domain, 
	$\tau_s \cap \tau_t = \emptyset$ if $s\neq t$,
	and $\overline{\Omega^{+}_\Delta} = \displaystyle\overline{\bigcup_{1 \leq s \leq S} \tau_s}$.
	We denote by $c_s = (c_{s,x},c_{s,y})$ the centroid of the triangle $\tau_s$.
\end{step}

\begin{step}\label{step:nbr}[Preparation of regularized differentiation in \Cref{step:numdiff} below.]
  For each triangle $\tau_s$, assign the corresponding sets of triangles $\mathcal{N}_s$
  which will be used  in the numerical differentiation.
  For the sake of clarity, we call $\mathcal{N}_s$ \textit{a neighborhood} of $\tau_s$. 
\end{step}

\begin{step}\label{step:BPint}[Discretization of the $\mathcal{T}$ operator \eqref{T_Formula}.]
It is easy to see that the integrands in \eqref{T_Formula} have  removable singularities when using polar coordinates, which yields to the computation of  the integrals 
	\begin{align*}
		\Psi(c;\tau) &:= \int_{-\pi}^{\pi}\rho_{\tau}(c;\varphi)e^{-(2j+1)i\varphi}\:d\varphi
	\end{align*}
for each triangle $\tau$ and $c \in \{c_s \:;\: 1 \leq t \leq S\}\cup\{x_j \in L \:;\: 1 \leq j \leq J\}$.
If $c$ is inside the triangle $\tau$, then $\rho_{\tau}(c;\varphi)$ is the distance from  $c$ to the boundary of the triangle $\tau$ in the $\varphi$-direction. If $c$ is exterior to $\tau$, then $\rho_\tau$ is the length of the segment determined  by the intersection of the semi-line from $c$ in the direction of $\varphi$-direction  with the triangle $\tau$; see \Cref{fig:integral}.

	\begin{figure}[ht]
		\begin{minipage}{.12\textwidth}
			\quad
		\end{minipage}
	
		\begin{minipage}{.35\textwidth}
			\centering
			\begin{tikzpicture}
				\draw (0,0.58) node {$\bullet$} -- +(30:2);
				\draw [dotted] (0,0.58) node [below] {\footnotesize{$c$}} -- +(0:2);
				\draw [->,>=stealth] (0,0.58)+(0:1) arc (0:28:1);
				\node at (1.2,0.9) {\footnotesize{$\varphi$}};
				\draw (-1,0) -- (1,0) -- (0,1.73) -- cycle;
				\node at (-0.6,0.15) {\footnotesize{$\tau_m$}};
				
				\draw [line width=2pt] (0,0.58) -- +(30:0.58);
				\draw [<->,xshift=-4pt,yshift=4pt] (0,0.58) -- +(30:0.58);
				\node at (0,1.1) {$\rho_m$};
				
			\end{tikzpicture}
		\end{minipage}
		\begin{minipage}{.35\textwidth}
		\centering
		\begin{tikzpicture}
			\draw [dotted](0,0) node {$\bullet$} -- (3,0);
			\draw [->,>=stealth] (0,0)+(0:0.8) arc (0:38:0.8);
			\node at (1,0.3) {\footnotesize{$\varphi$}};
			\draw (0,0) node [below] {\footnotesize{$c$}} -- +(40:3);
			\draw (3,1) -- (1.5,0.5) -- (1,2) -- cycle;
			\node at (2.8,0.6) {\footnotesize{$\tau_m$}};
			
			\draw [line width=2pt] +(40:1.7) -- +(40:2.44);
			\draw [<->,xshift=3pt,yshift=-3pt] +(40:1.7) -- +(40:2.44);
			\node at (2,1) {$\rho_m$};
			
		\end{tikzpicture}
	\end{minipage}

		\begin{minipage}{.12\textwidth}
			\quad
		\end{minipage}
		\caption{\label{fig:integral}The length $\rho_m(z;\varphi)$ cut by the triangle $\tau_m$. The case for $z \in \tau_m$ (left) and that for $z \not\in\tau_m$ (right) }
	\end{figure}
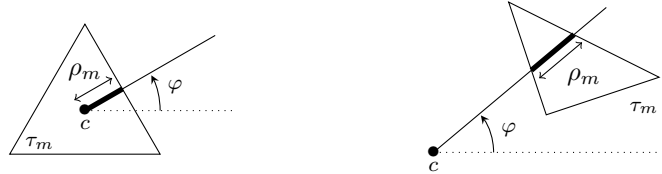

	One way to estimate the  $\Psi(c;\tau)$ is by a quadrature rule 
	with  $\rho_{\tau}$ calculated algebraically.
	
	The discretized version of $\mathcal{T}$  becomes 
	\begin{align*}
		\mathcal{T}_{m}\bigl[ V^0_{\Omega^{+},\text{even}} \bigr](x_j)
		&= -\dfrac{1}{\pi}\sum_{s=1}^{T}\sum_{m-1 \ge m-2j-1 \ge -2M} V^0_{m-2j-1}(c_s) \Psi(x_j;\tau_s)
		\quad 1 \leq j \leq J,
		\intertext{and}
		\mathcal{T}_{m}\bigl[ V^0_{\Omega^{+},\text{even}} \bigr](c_t)
		&= -\dfrac{1}{\pi}\sum_{s=1}^{T}\sum_{m-1 \ge m-2j-1 \ge -2M} V^0_{m-2j-1}(c_s) \Psi(c_t;\tau_s)
		\quad 1 \leq t \leq S,
	\end{align*}
	where $V^0_{\Omega^{+},\text{even}} = \{ V^0_{m-1}(c_s),V^0_{m-3}(c_s),\dots,V^0_{-2M}(c_s)\:;\:1\leq s \leq S\}$.
	Note that $\mathcal{T}_m$ always appears with the odd suffix $m$.
\end{step}

\begin{step}\label{step:datag0}[Discretization of the data $g^0$ in  \eqref{trace:vgk1}.]
		Let for $1 \leq k \leq K$ and $1 \leq n \leq N$
	\[
	g^0(\zeta_k,\theta_n) = \begin{cases}
		I^0f(\zeta_k,\theta_n), \quad & (\zeta_k,\theta_n) \in \Sigma_{+};\\
		0, \quad & \text{otherwise}.
	\end{cases}
	\]
	Then compute for $1 \leq k \leq K$ and $m=-2,-4,\dots,-2M$
	\[
	G^0_m(\zeta_k)
	= \dfrac{1}{2\pi} \sum_{n=1}^{N} g^0(\zeta_k,\theta_n)e^{-im\varphi_n}\Delta\varphi.
	\]
\end{step}

\begin{step} \label{step:linear1}[Discretization of the  singular integral equation \eqref{Pminus_v0n}.]
	Find $V^0_{L,\text{even}} = \{ V^0_m(x_j) \:;\: m=-2,-4,\dots,-2M, 1 \leq j \leq J \}$
	by solving the discretized version of \eqref{Pminus_v0n}:
	\begin{align}\label{Pminus_v0n_discretized}
	V^0_m(x_j) - \dfrac{i}{\pi}\sum_{\ell\neq j}\dfrac{V^0_m(x_\ell)}{x_j - x_\ell}\Delta x
	= \mathcal{F}_{m}\bigl[G^0_\text{even}\bigr](x_j).
	\end{align}
\end{step}

\begin{step}\label{step:BCV02}[Determining the modes inside $\OM^+$ by discretization of the equation \eqref{uzero_mODD_B}.]
%
%
	Compute for $m=-2,-4,\dots,-2M$ and $1 \leq s \leq S$
	\[
	V^0_m(c_s) = \mathcal{B}_{m}\bigl[V^0_{L,\text{even}},G^0_{\text{even}}\bigr](c_s).
	\]
	We set $V^0_{\Omega^{+},\text{even}} = \{ V^0_{-2}(c_s),\dots,V^0_{-2M}(c_s)\:;\:1\leq s \leq S \}$.
\end{step}

\begin{step}\label{step:datag1}[Discretization of the data $g^1$ in  \eqref{trace:vgk1}.]
	Let for $1 \leq k \leq K$ and $1 \leq n \leq N$
	\[
	g^1(\zeta_k,\theta_n) = \begin{cases}
		(\zeta_k\cdot\theta_n) I^0f(\zeta_k,\theta_n) - I^1f(\zeta_k,\theta_n), \quad & (\zeta_k,\theta_n) \in \Sigma_{+};\\
		0, \quad & \text{otherwise},
	\end{cases}
	\]
	Then for $1 \leq k \leq K$ and $m=-1,-3,\dots,-2M+1$, compute
	\[
	G^1_m(\zeta_k)
	= \dfrac{1}{2\pi} \sum_{n=1}^{N} g^1(\zeta_k,\theta_n)e^{-im\varphi_n}\Delta\varphi.
	\]
	We set $G^1_{\text{odd}} = \{ G^1_m(\zeta_k)\:;\: m=-1,-3,\dots,-2M+1, 1 \leq k \leq K\}$.
\end{step}

\begin{step} \label{step:linear2} [Discretization of the  singular integral equation \eqref{Pminus_un4}.]
	Find $V^1_{L,\text{odd}} = \{ V^1_m(x_j) \:;\: m=-1,-3,\dots,-2M+1, 1 \leq j \leq J \}$
	by solving the discretized version of \eqref{Pminus_un4}:
	\[
	V^1_m(x_j) - \dfrac{i}{\pi}\sum_{\ell\neq j}\dfrac{V^1_m(x_\ell)}{x_j - x_\ell}\Delta x
	= \mathcal{F}_{m}\bigl[G^1_{\text{odd}}\bigr](x_j) + 2\mathcal{T}_{m}\bigl[V^0_{\text{even}}\bigr](x_j).
	\]
\end{step}

\begin{step}\label{step:BPV1}
[Determining the modes inside $\OM^+$  by discretization of the equation \eqref{bv1}.]
		Compute for $1 \leq s \leq S$
	\[
	V^1_{-1}(c_s)
	= \mathcal{B}_{-1}\bigl[V^1_{L,\text{odd}}, G^1_{\text{odd}}\bigr](c_s)
	+ \mathcal{T}_{-1}\bigl[V^0_{\Omega^{+},\text{even}}\bigr](c_s).
	\]
\end{step}


\begin{step}\label{step:numdiff}[Numerical differentiation]
	Using neighborhood $\mathcal{N}_s$, determine $\{ \partial_x V^0_{-2}(c_s), \partial_y V^0_{-2}(c_s) \:;\: 1 \leq s \leq S \}$ as the least square solution to
        \[
		(\xi_t-\xi_s) \partial_x V^0_{-2}(c_s)
		+ (\eta_t-\eta_s) \partial_y V^0_{-2}(c_s)
		= V^0_{-2}(c_t) - V^0_{-2}(c_s),
		\quad \text{for all $t \neq s$ with $\tau_t \in \mathcal{N}_s$, $1 \leq t \leq S$}.
        \]
	
		Using neighborhood $\mathcal{N}_s$, determine 
	$\{ \partial_x V^1_{-1}(c_s), \partial_y V^1_{-1}(c_s), 
	\partial^2_{xx} V^1_{-1}(c_s), \partial^2_{xy} V^1_{-1}(c_s), \partial^2_{yy} V^1_{-1}(c_s) \:;\: 1 \leq s \leq S \}$
	as the least square solution to
	\begin{multline*}
		(\xi_t-\xi_s)      \partial_x V^1_{-1}(c_s)
		+ (\eta_t-\eta_s)    \partial_y V^1_{-1}(c_s)\\
		+ \dfrac{1}{2}\left\{
		(\xi_t-\xi_s)^2    \partial^2_{xx} V^1_{-1}(c_s)
		+ 2(\xi_t-\xi_s)(\eta_t-\eta_s)    \partial^2_{xy} V^1_{-1}(c_s)
		+ (\eta_t-\eta_s)^2    \partial^2_{yy} V^1_{-1}(c_s)\right\}\\
		=
		V^1_{-1}(c_t) - V^1_{-1}(c_s),
		\quad\text{for all $t \neq s$ with $\tau_t \in \mathcal{N}_s$, $1 \leq t \leq S$}.
	\end{multline*}
\end{step}


\begin{step}\label{step:F1}
	Compute for $1 \leq s \leq S$
	\begin{multline*}
		F_1(c_s)
		= \dfrac{1}{4}\Bigl(\partial^2_{xx}V^1_{-1}(c_s) + \partial^2_{yy}V^1_{-1}(c_s) \Bigr)\\
		+ \dfrac{1}{4}\overline{\Bigl(\partial^2_{xx}V^1_{-1}(c_s) - \partial^2_{yy}V^1_{-1}(c_s) - 2i\partial^2_{xy}V^1_{-1}(c_s)\Bigr)}
		+ \dfrac{1}{2}\Bigl(\partial_x V^0_{-2}(c_s) - i \partial_y V^0_{-2}(c_s) \Bigr).
	\end{multline*}
\end{step}

\begin{step}\label{step:final}[Reconstruction of  $f$ via  the equation \eqref{F_defn}.]
	Compute for $1 \leq s \leq S$
	\[
        f|_{\tau_s} \approx \bigl\langle 2\Repart{F_1(c_s)}, 2\Impart{F_1(c_s)}\bigr\rangle.
        \]
	This ends the algorithm. \qed
\end{step}




In the algorithm two issues are critical: 
the solvability of the discretized singular integral equations
in \Cref{step:linear1} and \Cref{step:linear2},
and the  numerical differentiation in \Cref{step:numdiff}.
While inversion of $\ds [I - \i H_s]$ is severely ill-posed (since $-\i$ is in the continuous spectrum of $H_s$ \cite{koppelmanPincus58,widom60}),
 its discretized version in \eqref{Pminus_v0n_discretized} is just mildly ill-posed, where the condition number grows linearly with respect to the number of nodes $J$, see \cite[Theorem 3.1]{fujiwaraSadiqTamasan23}. The stability and accuracy in the numerical differentiation  in \Cref{step:numdiff} are controlled by the choice of neighborhoods in $\mathcal{N}_s$. This choice provides a regularization for the algorithm.

In our previous studies~\cite{fujiwaraOmogbheSadiqTamasan23,fujiwaraSadiqTamasan20,fujiwaraSadiqTamasan23},
triangles sharing vertices or edges with $\tau_s$ have been chosen as $\mathcal{N}_s$ for computing the first order derivatives.
However, in the method for partial data requires taking some second order derivatives, thus making the procedure more numerically unstable. To overcome it, we adopt a neighborhood consisting of
all the triangles whose centers are at distance at most $R$ from center of $\tau_s$:
$\mathcal{N}_s(R) = \{ \tau_t \:;\: |c_t - c_s| < R \}$.
The effectiveness of this strategy is discussed in detail in the next section using numerical experiments.

\begin{remark}\label{remarkFull}
The proposed algorithm adapts to the case of data on the entire boundary (all set of lines passing through the support of $f$), a problem considered by the authors in \cite{fujiwaraOmogbheSadiqTamasan23}. Specifically, \Cref{step:linear1} and \Cref{step:linear2} are replaced by the evaluation
\[
V^0_m(c_s) := \mathcal{B}_{m}\bigl[G^0_{\text{even}}\bigr](c_s),
\]
while \Cref{step:BCV02} and \Cref{step:BPV1} are replaced by 
\[
V^1_{-1}(c_s) := \mathcal{B}_{-1}\bigl[G^1_{\text{odd}}\bigr](c_s)
+ \mathcal{T}_{-1}\bigl[V^0_{\Omega^{+},\text{even}}\bigr](c_s), 
\]
where
\begin{multline*}
	\mathcal{B}_{m}\bigl[ G \bigr](c)
	= 
	  \dfrac{1}{2\pi i}\sum_{k=1}^{K} \dfrac{G_{m}(\zeta_k)}{\zeta_k-c}\zeta_k'\Delta\omega_k
	\\
	+ \dfrac{1}{\pi}\sum_{k=1}^{K} \left\{\sum_{m-2\ge m-2j \ge -2M} G_{m-2j}(\zeta_k)\left(\dfrac{\overline{\zeta_k}-\overline{c}}{\zeta_k-c}\right)^j\right\}\Impart\left(\dfrac{\zeta_k'}{\zeta_k-c}\right)\Delta\omega_k,
	\quad c \in \Omega,
\end{multline*}
and $\zeta_k$ are now distributed on the entire boundary. 

\end{remark}


\section{Numerical Experiments}\label{sec:numstudy}

In this section we apply the proposed algorithm to two numerical examples, and
do a quantitative study on the numerical stability and accuracy. 



In both experiments 
the domain $\Omega$ is the unit disc and $\Omega^{+} = \{ x^2 + y^2 < 1, y > 0 \}$.
The triangulation of $\Omega^{+}_\Delta$ consists of $853$ triangles,
and the average of diameters of triangles is approximately $0.0766$.
In \Cref{step:nbr}, we choose the set of neighboring triangles $N_s(R)$
with distance at most $0.15$ from the center of $\tau_s$; $R=0.15$. This choice allows us
to consider the second layer of neighboring triangles to be taken into $N_s$.
The direction $\theta$ is discretized by $\Delta\varphi = 2\pi/1{,}440$,
and the number of measurement nodes on arc $\Lambda$ is $720$.
In order to avoid the inverse crime, we 
simulate the data by applying the Gauss-Legendre numerical quadrature with 32 points to compute the integrals
\begin{subequations}\label{eq:u0u1:exact}
	\begin{align}
		v^0(z,\theta) &= \int_0^{\theta\cdot(z-z_{\text{in}})} \theta\cdot f(z_{\text{in}}+t\theta)\:dt,\\
		\intertext{and}
		v^1(z,\theta) &= \int_0^{\theta\cdot(z-z_{\text{in}})} v^0(z_{\text{in}}+t\theta)\:dt,
	\end{align}
\end{subequations}
where $t_0 = \inf\{ t\in\mathbb{R} \:|\: z+t\theta \in \Omega\}$ and $z_\text{in} = z + t_0\theta$ with $z\in \Lambda$, and we do not solve the system of transport equation \eqref{bvp_UK_transport} by neither finite element methods nor finite difference methods to obtain this data.

The number of discretization of the chord $L=(-1,1)$ is $J=458$,
and thus $\Delta x = 2/458 \approx 0.00437 \approx \pi/720$.
Throughout the section, all computations are processed on EPYC 7643 with 96 cores OpenMP parallel computation
by the IEEE754 double precision arithmetic.
We use $M=128$ to truncate the Fourier series, where the highest Fourier mode used in the reconstruction is $-256$.
\begin{experiment}\label{example1}
	We consider the reconstruction of the vector field (\cite{kazantsevBukhgeimJr07}, also see \Cref{fig:ex1:exact:f})
	\begin{align}
		f(x,y) &= \nabla\Bigl( \sin\pi(x^2+y^2) \Bigr) + f^{\text{s}}(x,y), \label{eq:ex1}
		\intertext{where}
		f^{\text{s}}(x,y) &=
		\begin{pmatrix}
			2xy\cos(x^2+y^2) + \cos(6xy)-6xy\sin(6xy)\\
			-\sin(x^2+y^2) - 2x^2\cos(x^2+y^2) + 6y^2\sin(6xy)
		\end{pmatrix} \label{eq:ex1:solenoidal}
	\end{align}
 is the solenoidal part of $f$. 

	\begin{figure}[h]
		\includegraphics[width=.32\textwidth,bb=10 32 335 190]{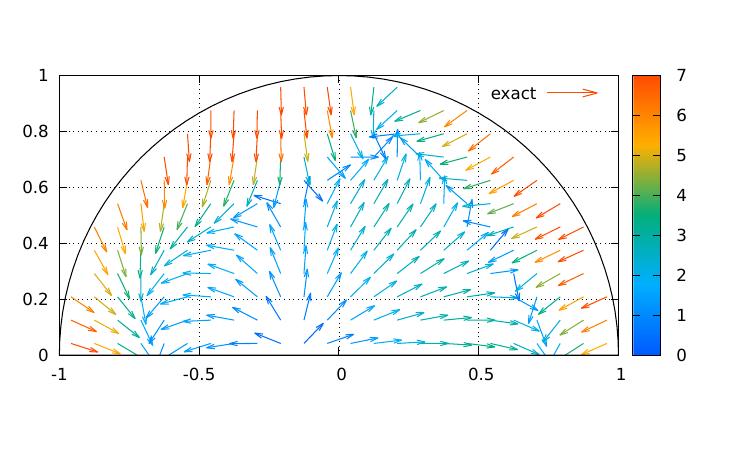}
		\includegraphics[width=.32\textwidth,bb=75 45 345 180]{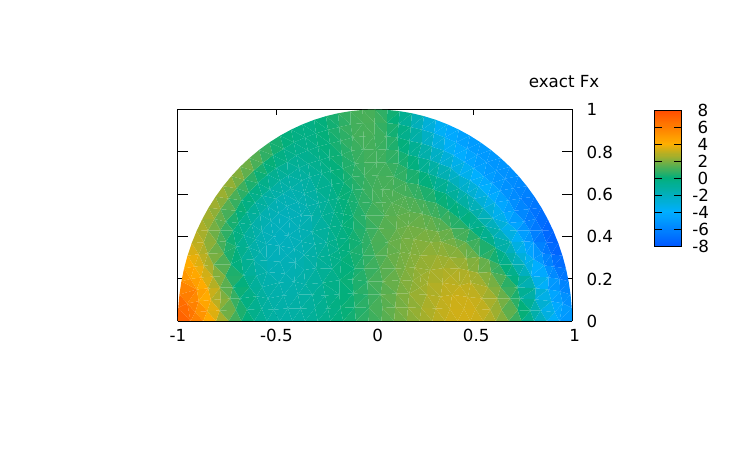}
		\includegraphics[width=.32\textwidth,bb=75 45 345 180]{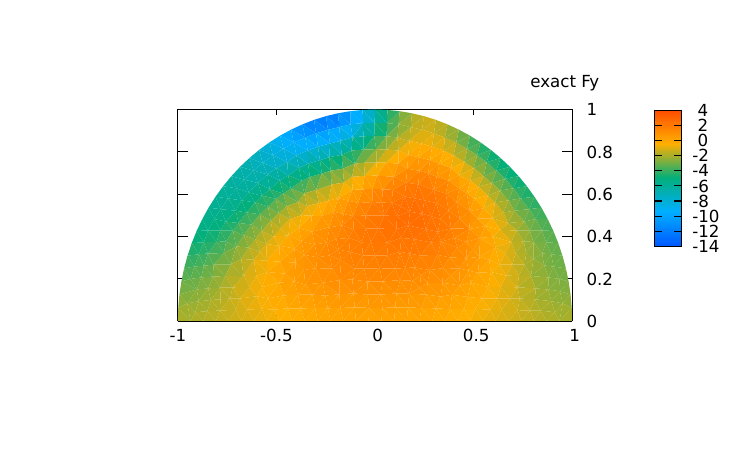}
		\caption{\label{fig:ex1:exact:f}Exact vector field $f$ in \eqref{eq:ex1}: $f=\langle f_1,f_2\rangle$ (left), $f_1$ (middle), and $f_2$ (right)}
	\end{figure}
	
	For $(\zeta, \theta) \in \Sigma_{+}$, the simulated data $I^{j}f(\zeta,\theta)$, $j=0,1$ in \eqref{eq:mtrans} are computed by numerical quadrature; see the sinograms in \Cref{fig:ex1:exact:sinogram}, where white areas correspond to non-observed data on $\partial\Omega\setminus\Lambda$.
	
	\begin{figure}[h]
          \includegraphics[width=.48\textwidth,bb=75 30 340 175]{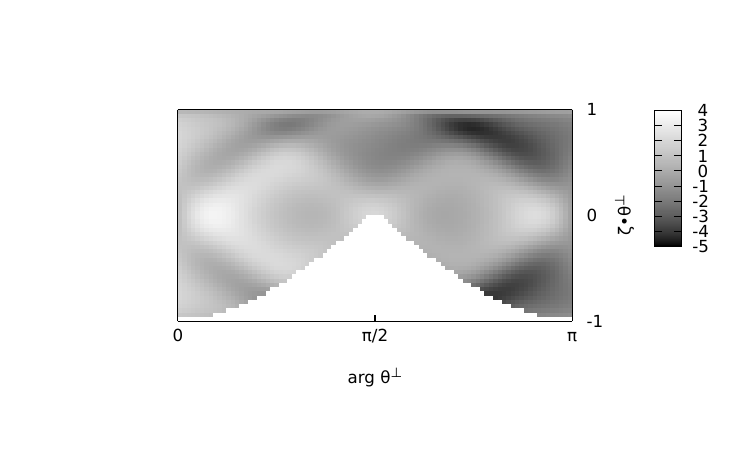}
          \includegraphics[width=.48\textwidth,bb=75 30 340 175]{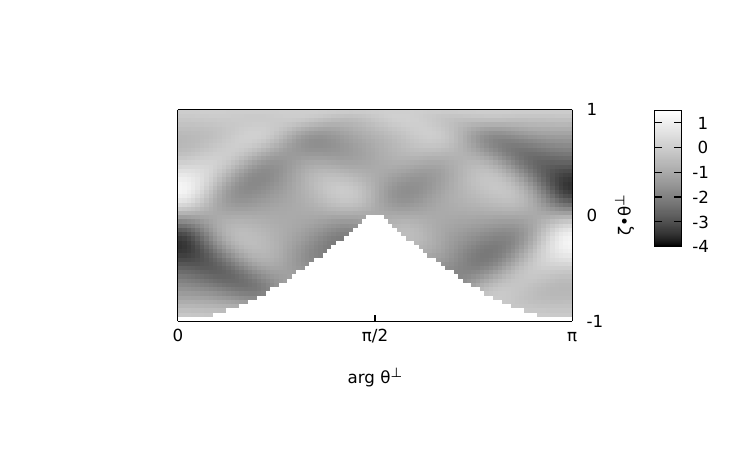}
	  \caption{\label{fig:ex1:exact:sinogram}Sinogram of the noiseless  simulated data $I^0f$ (left) and $I^1f$ (right) for $f$ in \eqref{eq:ex1:solenoidal}}
	\end{figure}


%
	 \Cref{fig:ex1:exact:reconst:R0.15} shows the reconstruction results via the proposed method. 
	The reconstruction error in the $L^2$ sense  is  $48.1\%$.
   \Cref{fig:ex1:noisy:reconst:R0.15} illustrates the reconstruction results via the proposed method from noisy data: $I^{0}f$ contains $6.0\%$ additive noise, while $I^{1}f$ contains $4.2\%$ additive noise. This difference in relative errors is due to the built-in pseudo-random number routine used in experiments. The reconstruction errors in the $L^2$ sense is $48.3\%$. Each numerical computation for reconstruction consumes approximately 25 seconds. 
   A detailed study of the effect of regularization via different choices of the neighborhood $\mathcal{N}_s$ in differentiation is left for a separate discussion.
	Numerical results by the proposed method contains significantly
	larger errors compared to those in measurement data summarized in \Cref{tbl:ex1:errors}
        that also include the errors in the case with whole boundary measurement.

	\begin{figure}[h]
		\includegraphics[width=.32\textwidth,bb=10 32 335 190]{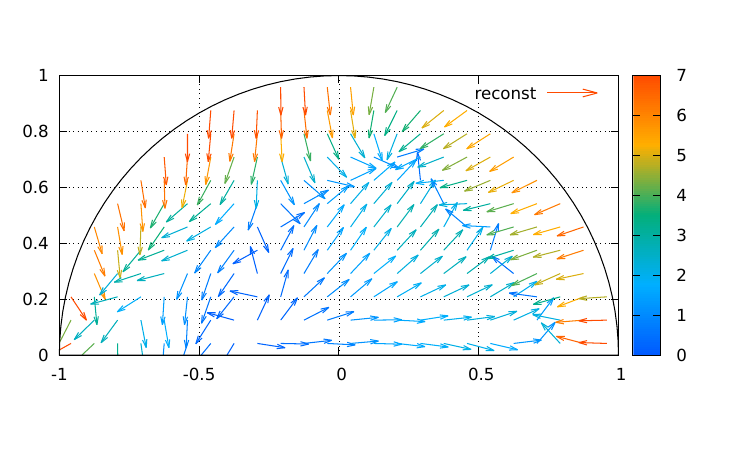}
		\includegraphics[width=.32\textwidth,bb=75 45 345 180]{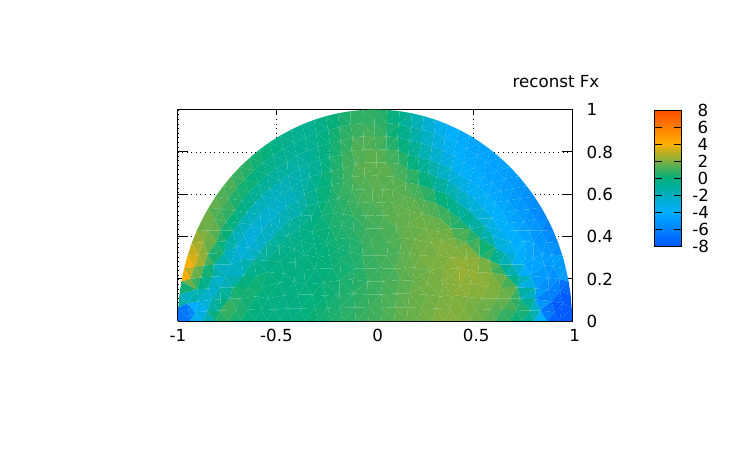}
		\includegraphics[width=.32\textwidth,bb=75 45 345 180]{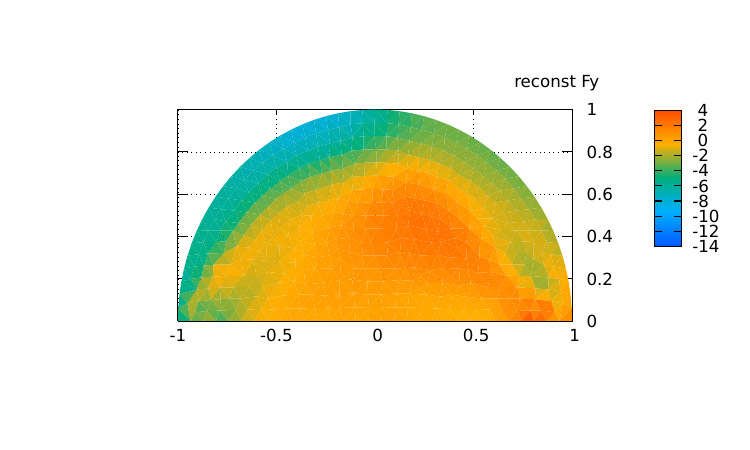}
		\caption{\label{fig:ex1:exact:reconst:R0.15}	Numerical reconstruction from  noiseless simulated data in  \Cref{fig:ex1:exact:sinogram} 
		}
	\end{figure}

	\begin{figure}[h]
          \includegraphics[width=.48\textwidth,bb=75 30 340 175]{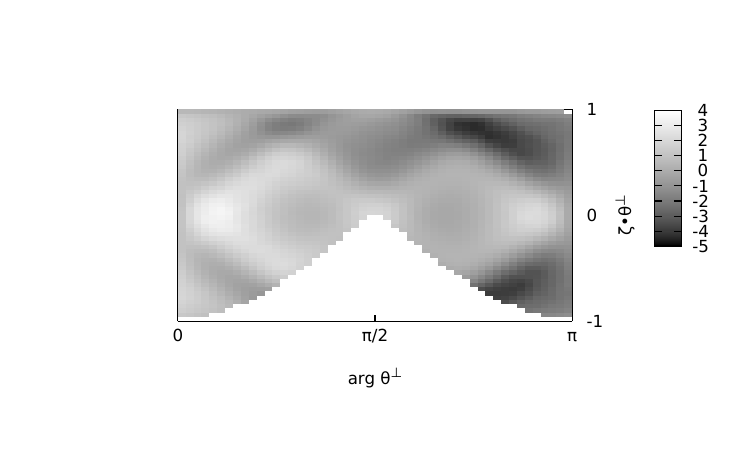}
          \includegraphics[width=.48\textwidth,bb=75 30 340 175]{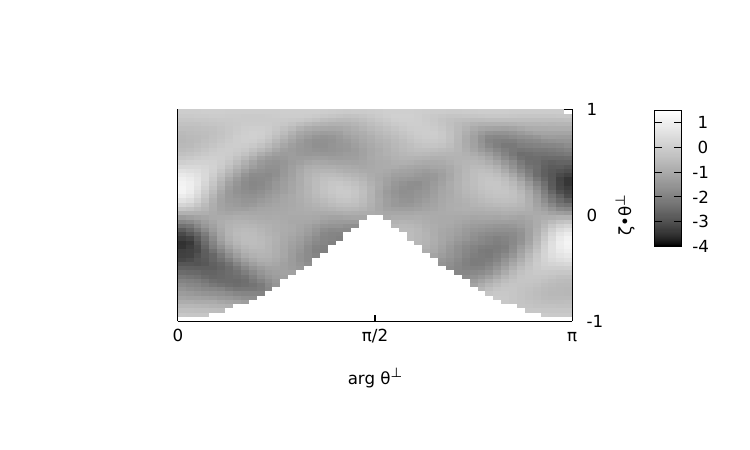}
	  \caption{\label{fig:ex1:noisy:sinogram}Sinogram of the noisy simulated data $I^0f$ (left) and $I^1f$ (right) for $f$ in \eqref{eq:ex1:solenoidal}}
	\end{figure}
	\begin{figure}[h]
		\includegraphics[width=.32\textwidth,bb=10 32 335 190]{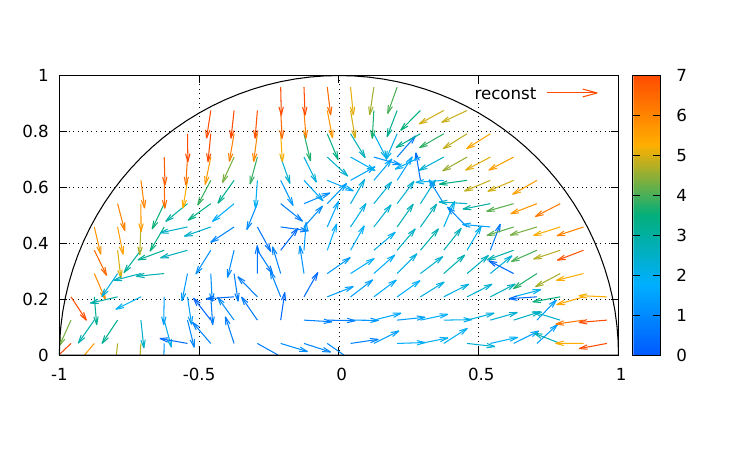}
		\includegraphics[width=.32\textwidth,bb=75 45 345 180]{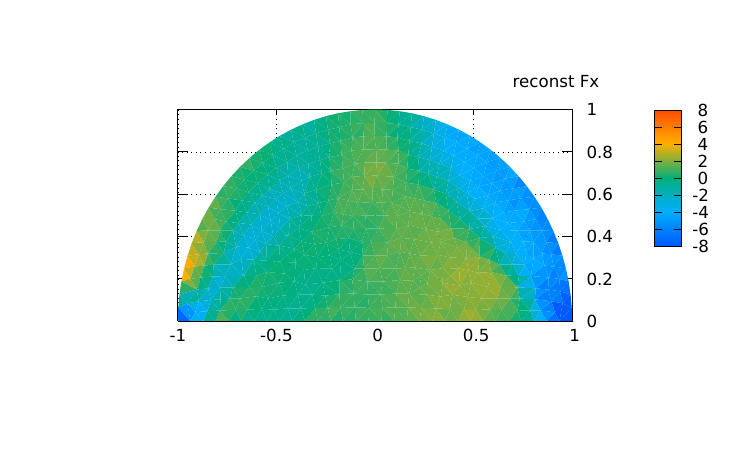}
		\includegraphics[width=.32\textwidth,bb=75 45 345 180]{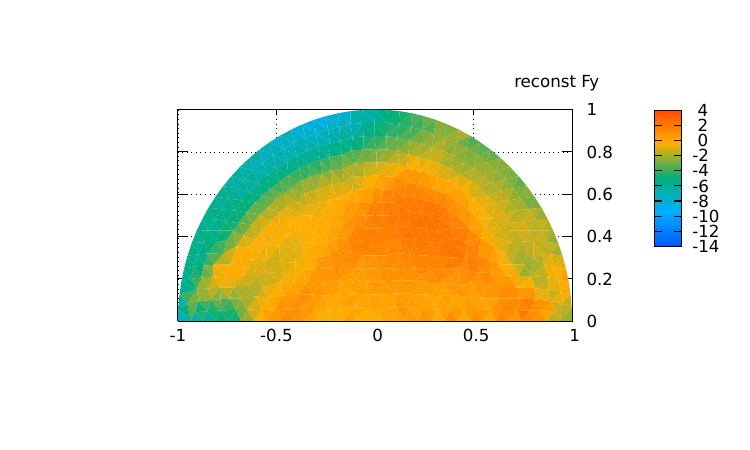}
		\caption{\label{fig:ex1:noisy:reconst:R0.15}
	Numerical reconstruction from noisy data in  \Cref{fig:ex1:noisy:sinogram} 
	}
	\end{figure}

        Aside from the inaccuracy due to the ill-posedness in differentiation, the reconstruction is less accurate near $L$ due to the ill-posedness of the singular integral equation \eqref{Pminus_v0n}.
We illustrate this lack of accuracy by comparing the error in two different regions:
in the upper half domain $\Omega^{+}_{\Delta}$,
and a subset strictly included in the upper half plane
\begin{align*}
\Omega^{+,\text{up}}_{\Delta} = \bigcup_{\substack{1 \leq s \leq S\\ c_{s,y} > 0.1}} \tau_s.
\end{align*}
It is observed that the reconstruction near the boundary is unstable since the
singular integral equation amplifies the error.
We can say that concentration of errors near the boundary is expected due to
theoretical evaluation of numerical integration. Although the integral is not
singular, it is numerically singular near the end of the chord.
This is a drawback to reconstruct near the boundary, but good for interior reconstruction.

\Cref{tbl:ex1:errors} contrast the errors in the reconstruction using the partial data case with the full data case. The reconstruction in the partial data case is obtained by the  proposed algorithm, while in the full data case we use the adapted algorithm in Remark \ref{remarkFull}.  Note that magnitude of errors in $\Omega^{+}_{\Delta}$ and $\Omega^{+,\text{up}}_{\Delta}$ are
        in the same range for the full measurement case,
        while errors in $\Omega^{+,\text{up}}_{\Delta}$ is remarkably smaller than those in
        $\Omega^{+}_{\Delta}$ for partial measurement case.
        These results confirmed that errors in the proposed method for partial measurement concentrate near
        the chord.

\begin{table}[h]
	\caption{\label{tbl:ex1:errors}Errors in \Cref{example1} measured in the domains $\Omega^{+}_{\Delta}$ and $\Omega^{+,\text{up}}_{\Delta}$.}
	\centering
	\begin{tabular}{l|cc|cc}
		\toprule
                & \multicolumn{2}{c|}{full measurement} & \multicolumn{2}{c}{partial measurement} \\
		& noiseless data & noisy data & noiseless data & noisy data \\
		\midrule
		Noise in $I^0f$ & $0\%$ & $4.8\%$ & $0\%$ & $6.0\%$ \\
		Noise in $I^1f$ & $0\%$ & $5.6\%$ & $0\%$ & $4.2\%$ \\
		\midrule
		Error in $L^2(\Omega^{+}_{\Delta})$           & $14.0\%$ & $15.2\%$ & $48.1\%$ & $48.3\%$ \\
		Error in $L^2(\Omega^{+,\text{up}}_{\Delta})$ & $14.0\%$ & $15.0\%$ & $37.3\%$ & $37.8\%$ \\
		\bottomrule
	\end{tabular}
\end{table}

	From \Cref{tbl:ex1:errors}, we also observe that the proposed schemes do not show significant differences in the reconstruction from noiseless vs.\ noisy data. This indicates that the proposed method, while not highly accurate, is  stable. It also emphasizes that the numerical differentiation is one of the main factors affecting the accuracy.

\end{experiment}


\begin{experiment}\label{example2}
	We consider the vector fields
	\begin{equation}\label{eq:ex2}
		f(x,y) = \nabla\left( \arctan\dfrac{y}{2+x} \right) + f^s(x,y),
	\end{equation}
	which has the same solenoidal part \eqref{eq:ex1:solenoidal}
	as \Cref{example1}.
	The vector field is depicted in \Cref{fig:ex2:exact:f}
	along with its components $f_1$ and $f_2$ measured on the upper semi-disc.
	\begin{figure}[h]
		\includegraphics[width=.32\textwidth,bb=10 32 335 190]{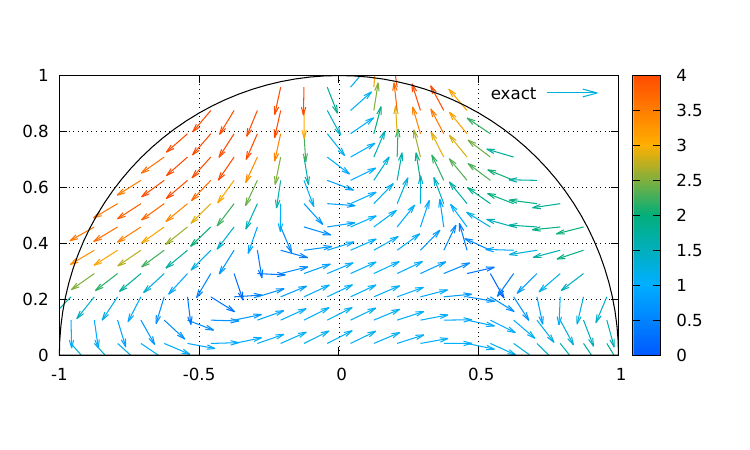}
		\includegraphics[width=.32\textwidth,bb=75 45 345 180]{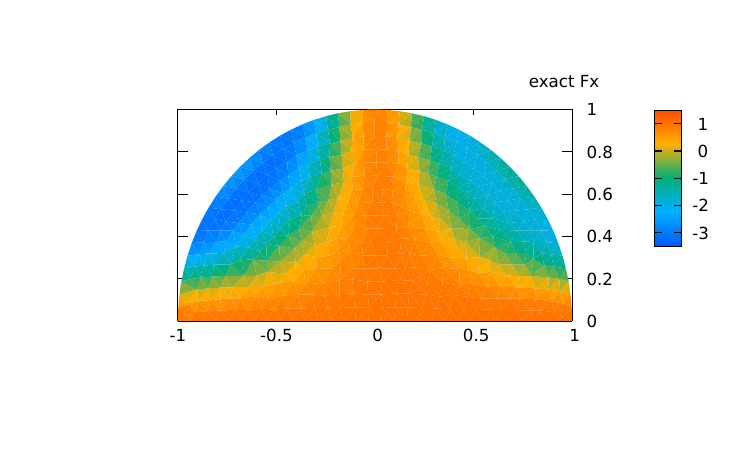}
		\includegraphics[width=.32\textwidth,bb=75 45 345 180]{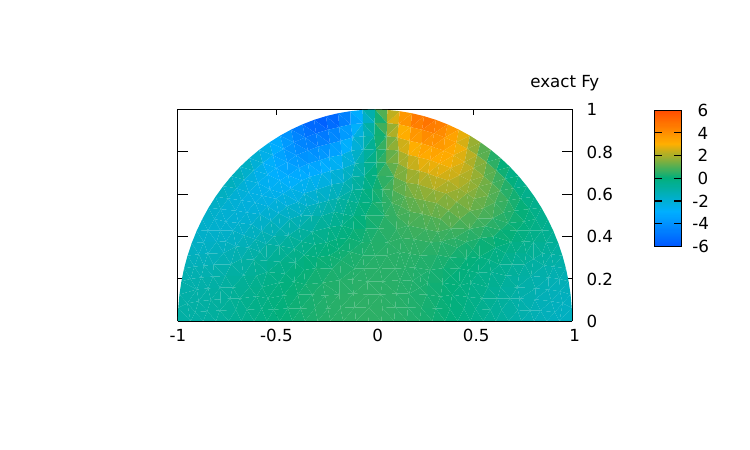}
\caption{\label{fig:ex2:exact:f}Exact vector field $f$ in \eqref{eq:ex2}: $f=\langle f_1,f_2\rangle$ (left), $f_1$ (middle), and $f_2$ (right)}

	\end{figure}
\Cref{fig:ex2:exact:sinogram} depicts the simulated measurement data to this vector field as the sinograms. 
        \begin{figure}[h]
          \includegraphics[width=.48\textwidth,bb=75 30 340 175]{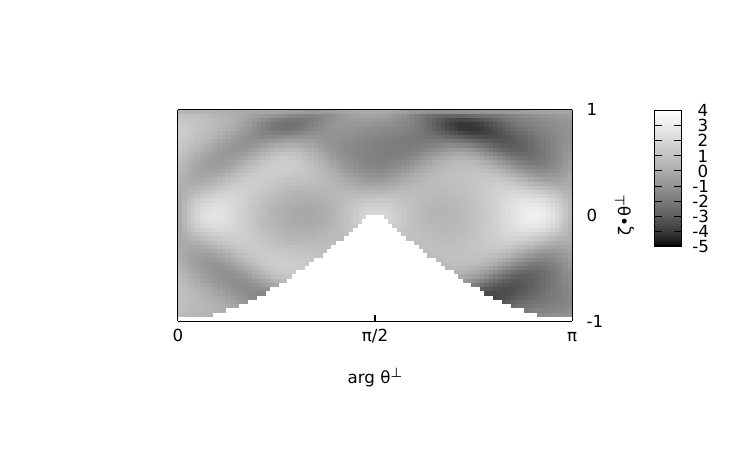}
          \includegraphics[width=.48\textwidth,bb=75 30 340 175]{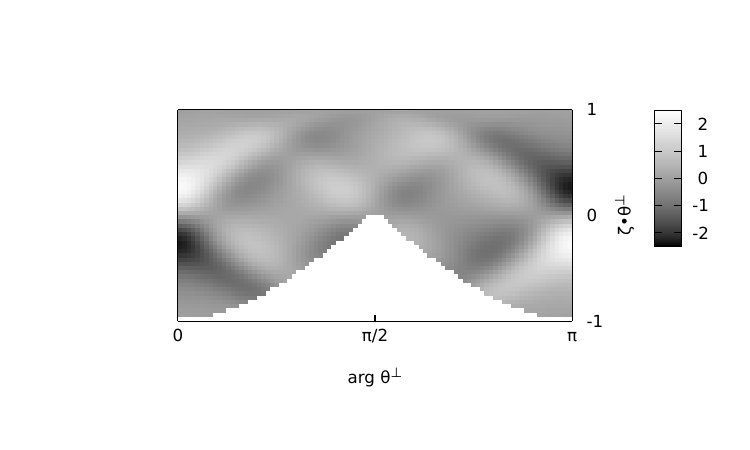}
	  \caption{\label{fig:ex2:exact:sinogram}Sinogram of the noiseless  simulated data $I^0f$ (left) and $I^1f$ (right) for $f$ in \eqref{eq:ex2}}
        \end{figure}
	
	To this partial measurement data, we can reconstruct the vector field in upper semi-disk shown in \Cref{fig:ex2:exact:reconst}, 	which has $80.9\%$ relative errors in the $L^2$ sense with $\mathcal{N}_s(0.15)$. 	When we put $6.8\%$ and $3.0\%$ measurement errors in $I^0f$ and $I^1f$ respectively illustrated in \Cref{fig:ex2:noisy:sinogram}, 	we can obtain results shown in \Cref{fig:ex2:noisy:reconst}, which has $77.4\%$ errors. 
	\Cref{tbl:ex2:errors} contrast the errors in the reconstruction using the partial data case with the full data case, and they show similar trends as in \Cref{example1}.
	\begin{figure}[h]
		\includegraphics[width=.32\textwidth,bb=10 32 335 190]{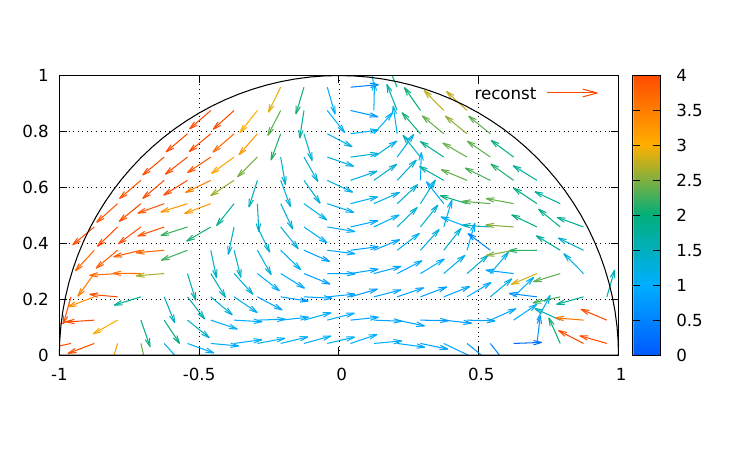}
		\includegraphics[width=.32\textwidth,bb=75 45 345 180]{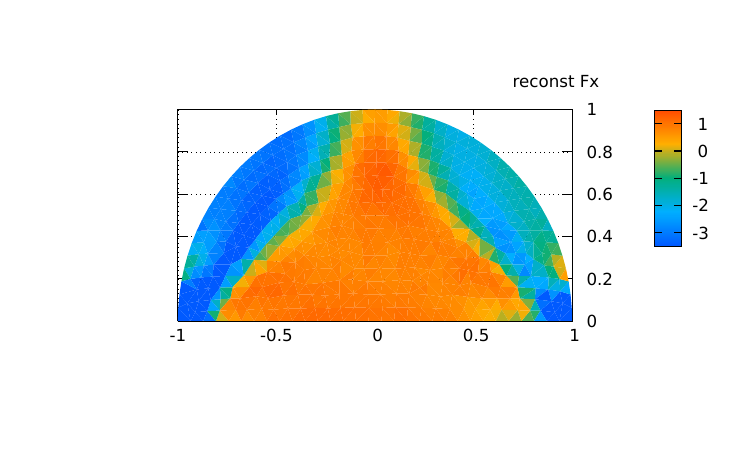}
		\includegraphics[width=.32\textwidth,bb=75 45 345 180]{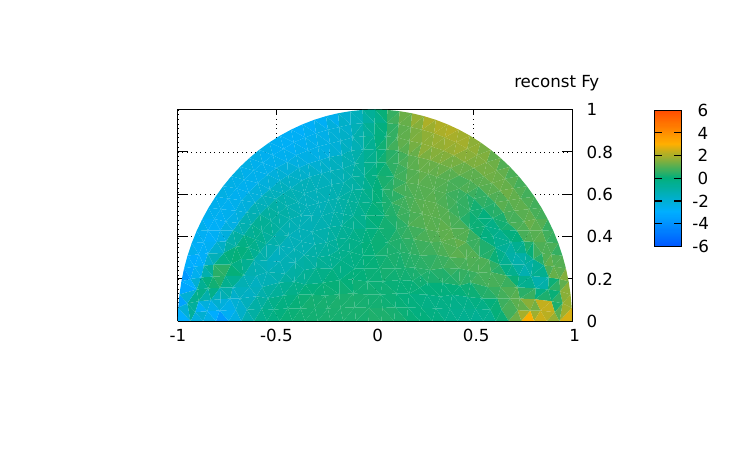}
		\caption{\label{fig:ex2:exact:reconst}
		Numerical reconstruction from  noiseless simulated data in  \Cref{fig:ex2:exact:f}} 
	\end{figure}
	
	\begin{figure}[h]
          \includegraphics[width=.48\textwidth,bb=75 30 340 175]{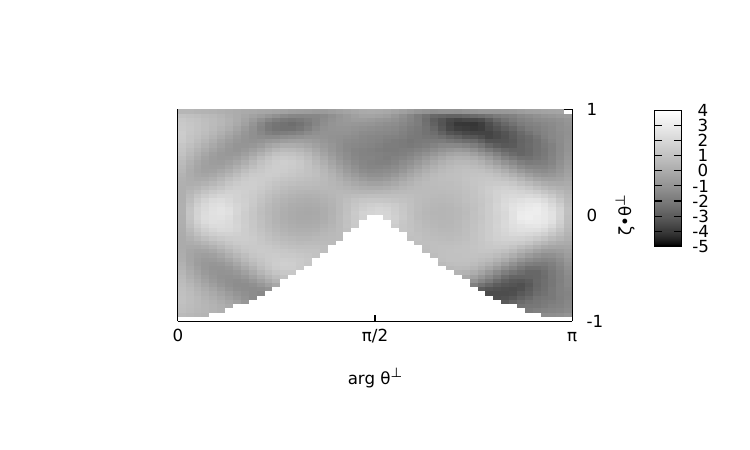}
          \includegraphics[width=.48\textwidth,bb=75 30 340 175]{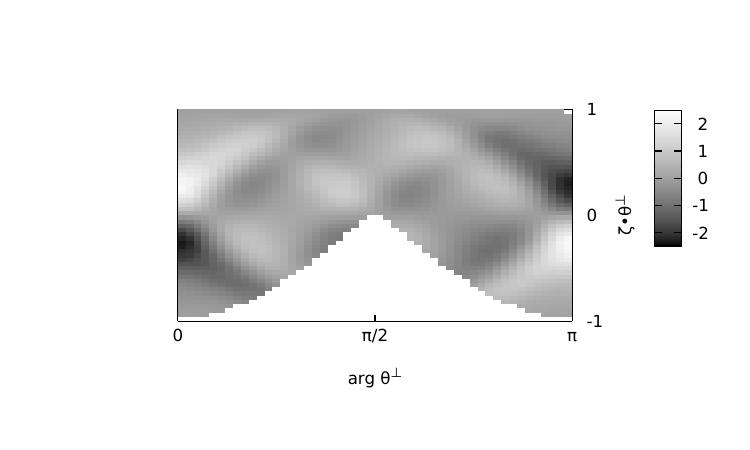}
	   \caption{\label{fig:ex2:noisy:sinogram}Sinogram of the noisy simulated data $I^0f$ (left) and $I^1f$ (right) for $f$ in \eqref{eq:ex2}}
	\end{figure}
	
	\begin{figure}[h]
		\includegraphics[width=.32\textwidth,bb=10 32 335 190]{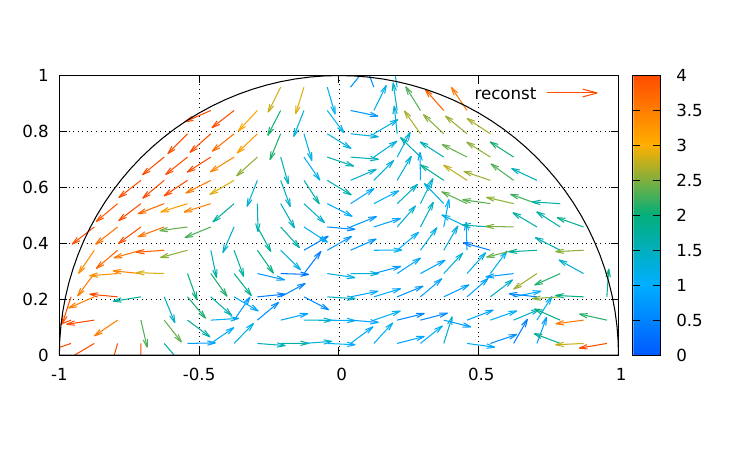}
		\includegraphics[width=.32\textwidth,bb=75 45 345 180]{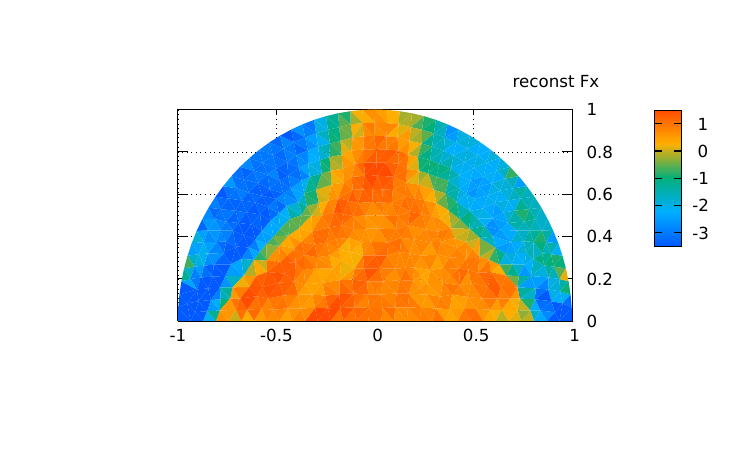}
		\includegraphics[width=.32\textwidth,bb=75 45 345 180]{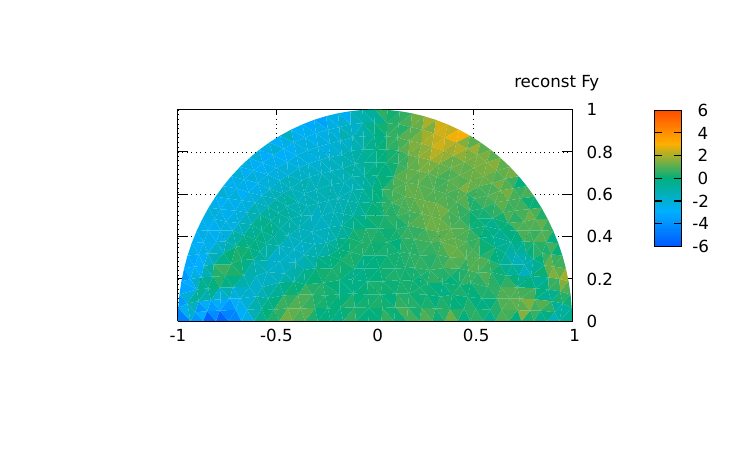}
		\caption{\label{fig:ex2:noisy:reconst}
			Numerical reconstruction from noisy data in  \Cref{fig:ex2:noisy:sinogram}}
	\end{figure}

\begin{table}[h]
	\caption{\label{tbl:ex2:errors}Errors in \Cref{example2} 
	measured in the domains $\Omega^{+}_{\Delta}$ and $\Omega^{+,\text{up}}_{\Delta}$.}
	\centering
	\begin{tabular}{l|cc|cc}
		\toprule
                & \multicolumn{2}{c|}{full measurement} & \multicolumn{2}{c}{partial measurement} \\
		& noiseless data & noisy data & noiseless data & noisy data \\
		\midrule
		Noise in $I^0f$ & $0\%$ & $5.4\%$ & $0\%$ & $6.8\%$ \\
		Noise in $I^1f$ & $0\%$ & $4.0\%$ & $0\%$ & $3.3\%$ \\
		\midrule
		Error in $L^2(\Omega^{+}_{\Delta})$           & $15.7\%$ & $20.5\%$ & $80.9\%$ & $77.4\%$ \\
		Error in $L^2(\Omega^{+,\text{up}}_{\Delta})$ & $15.5\%$ & $20.1\%$ & $59.8\%$ & $60.2\%$ \\
		\bottomrule
	\end{tabular}
\end{table}

\end{experiment}

\bigskip

As noted in Remark 1, the theoretical reconstruction applies to compactly supported vector fields. However, the numerical experiments consider two vectors fields with support touching the boundary. Yet, the algorithm applies due to the regularizing effect by the truncation in the Fourier series.  For the same order of truncation, the difference in the reconstruction error of the original vector fields supported up to the boundary and its corresponding one-layer-cutoff-away-from-the-boundary show no significant differences: contrast the two rows in the full measurement case in Tables 1 and 2. 




The relative error in the reconstruction from partial data is commensurate to the error obtained by taking three derivatives: Specifically, numerical solutions of each of the discretized integral equations in Steps 5 and 8 amount to taking half a derivative of the noise  (as shown in \cite{fujiwaraSadiqTamasan21}), followed by taking two more derivatives in Step 10.
Although the relative errors in reconstructions are significant, it is evident that the proposed method reasonably recovers the overall structure of the vector field, while conventional methods primarily	focused on reconstructing the solenoidal component.

        The neighborhoods $\mathcal{N}_s$ can be chosen for each triangle (element-wise), and it might depend on the profile of the vector field, the distance from boundaries, and other discretization parameters. The optimal choice requires further quantitative studies since it depends on various factors.
Errors in \Cref{example2} are significantly larger than those in \Cref{example1} in the partial measurement case. Further studies are also needed to clarify the relation of errors and phantom properties.
\section{conclusion}\label{sec:conclusion}
We present a first partial reconstruction result in the non-local vector tomography problem of recovering a planar vector field from its zero and first momenta ray transforms when data is only available on a strict subset of lines (the ones passing through an arc $\Lambda$).
 
 The reconstruction is based on solving  an inverse boundary value problem for a system of Bukhgeim-Beltrami equations  with partial data on a boundary arc $\Lambda$.
The key step is transporting the data from 
 $\Lambda$ to the inner chord $L$ (inaccessible by direct measurement) by solving an ill-posed Cauchy type singular integral equation. 
 

The theoretical results in this paper extend to tensors of  arbitrary order $m\geq2$, where transportation of data from the boundary arc $\Lambda$ to the inner  chord $L$ can be done at each of the sweep down steps in the method in \cite{fujiwaraOmogbheSadiqTamasan23}.
  However, at each of the $(m+1)$ steps,  the reconstruction 
  would require solving an unstable singular integral equation followed by taking a derivative.
  The  increase in the order of ill-posedness with  the order of the tensor renders the method difficult to apply for reconstruction of higher order tensors. 
     The method still yields a unique determination result in this non-local problem.

\section*{Acknowledgment}
The work of H.~Fujiwara was supported by JSPS KAKENHI Grant Numbers JP22K18674 and JP24K00539. 

\end{document}